\documentclass{scrartcl}
%
%


\usepackage[T1]{fontenc}
\usepackage[utf8]{inputenc}
\usepackage{amsmath, amsthm, amssymb}
\usepackage{dsfont}
\usepackage{graphicx}
\usepackage[format=plain]{caption}
\usepackage{capt-of}
\usepackage{subcaption}
\usepackage{bm}

\newcommand{\norm}[1]{\left\| #1 \right\|}  
\renewcommand{\d}{\,\mathrm{d}} 
\newcommand{\e}{\mathrm{e}} 
\newcommand{\ddt}[1]{\frac{\mathrm{d}} {\mathrm{d}t} #1}   
\newcommand{\N}{\mathbb{N}}  

\newcommand{\R}{\mathbb{R}}


\newcommand{\dist}{\operatorname{dist}}

\newcommand{\eps}{\varepsilon}
\renewcommand{\phi}{\varphi}
\newcommand{\ul}{\underline}
\newcommand{\ol}{\overline}

\numberwithin{equation}{section}

\newtheorem{thm}{Theorem}[section]

\newtheorem{lm}[thm]{Lemma}
\newtheorem{cond}[thm]{Condition}

\theoremstyle{definition} 
\theoremstyle{definition}



\title{A local input-to-state stability result w.r.t.~attractors of nonlinear reaction-diffusion equations} 

\author{Sergey Dashkovskiy$^{1}$, Oleksiy Kapustyan$^{2}$, Jochen Schmid$^{1,3}$\\  
\small $^1$ Institute for Mathematics, University of W\"urzburg, 97074 W\"urzburg, Germany\\
\small $^2$ Kiev National Taras Shevchenko University, 01033 Kiev, Ukraine\\
\small $^3$ Fraunhofer Institute for Industrial Mathematics (ITWM), 67663 Kaiserslautern, Germany\\ 
\small sergey.dashkovskiy@mathematik.uni-wuerzburg.de, alexkap@univ.kiev.ua,\\ \small jochen.schmid@itwm.fraunhofer.de}  

\date{}

\begin{document}

\maketitle

\begin{abstract}
\small{ \noindent 
We establish the local input-to-state stability of a large class of disturbed nonlinear reaction-diffusion equations w.r.t.~the global attractor of the respective undisturbed system. 
}
\end{abstract}

{ \small \noindent 
Index terms:  local input-to-state stability, global attractor, nonlinear reaction-diffusion equations
}

\section{Introduction}

In this paper, we are concerned with disturbed nonlinear reaction-diffusion 
equations of the form
\begin{align} \label{eq:react-diffus-eq}
\begin{split}
\partial_t y(t,\zeta) &= \Delta y(t,\zeta) + g(y(t,\zeta)) + h(\zeta) u(t) \qquad (\zeta \in \Omega) \\
y(t,\zeta) &= 0 \qquad (\zeta \in \partial \Omega)
\end{split} 
\end{align}
on a bounded domain $\Omega \subset \R^d$ 
with smooth boundary $\partial \Omega$, 
where $g \in C^1(\R,\R)$ and $h \in L^2(\Omega,\R)$ and the disturbance $u$ belongs to $\mathcal{U} := L^{\infty}([0,\infty),\R)$. It is well-known~\cite{Robinson} that the corresponding undisturbed equation
\begin{align} \label{eq:react-diffus-eq, undisturbed}
\begin{split}
\partial_t y(t,\zeta) &= \Delta y(t,\zeta) + g(y(t,\zeta)) \qquad (\zeta \in \Omega) \\
y(t,\zeta) &= 0 \qquad (\zeta \in \partial \Omega)
\end{split} 
\end{align}
has a unique global attractor $\Theta \subset X := L^2(\Omega,\R)$ under suitable growth and upper-boundedness conditions on the nonlinearity $g$ and its derivative $g'$ respectively. As usual, a global attractor for~\eqref{eq:react-diffus-eq, undisturbed} is defined to be a compact subset of $X$ that is invariant and uniformly attractive for~\eqref{eq:react-diffus-eq, undisturbed}. Also, it can be shown~\cite{KaVa09} that the global attractor $\Theta$ of~\eqref{eq:react-diffus-eq, undisturbed} is a stable set for~\eqref{eq:react-diffus-eq, undisturbed}. 
\smallskip

What we show in this paper is that the disturbed reaction-diffusion equations~\eqref{eq:react-diffus-eq} are locally input-to-state stable w.r.t.~the global attractor $\Theta$ of the undisturbed equation~\eqref{eq:react-diffus-eq, undisturbed}. 
So, 
we show that there exist comparison functions $\beta \in \mathcal{KL}$ and $\gamma \in \mathcal{K}$ and radii $r_{0x}, r_{0u} > 0$ such that for every initial value $y_0 \in X$ with $\norm{y_0}_{\Theta} \le r_{0x}$ and every disturbance $u \in \mathcal{U}$ with $\norm{u}_{\infty} \le r_{0u}$ the 
global weak solution $$[0,\infty) \ni t \mapsto y(t,\cdot) = y(t,y_0,u) \in X$$ 
of the boundary value problem~\eqref{eq:react-diffus-eq}  
with initial condition $y(0,\cdot) = y_0 \in X$ satisfies the following estimate:
\begin{align} \label{eq:lISS-estimate,intro}
\norm{y(t,y_0,u)}_{\Theta} \le \beta(\norm{y_0}_{\Theta},t) + \gamma(\norm{u}_{\infty}) \qquad (t \in [0,\infty)). 
\end{align}
See~\cite{Mi16} for the analogous definition in the special case~$\Theta = \{0\}$. 
%
In the above relations, we use the standard notation
\begin{align} \label{eq:norm-Theta,def}
\norm{x}_{\Theta} := \dist(x,\Theta) := \inf_{\theta \in \Theta} \norm{x-\theta} \qquad (x \in X)
\end{align}
and the standard definitions for the comparison function classes $\mathcal{KL}$ and $\mathcal{K}$, which are recalled in~\eqref{eq:comparison-fct-classes} below. 
In words, 
the local input-to-state stability estimate~\eqref{eq:lISS-estimate,intro} means 
that  
\begin{itemize}
\item[(i)] the 
invariant set $\Theta$ for~\eqref{eq:react-diffus-eq, undisturbed} is locally stable and 
attractive for the undisturbed system~\eqref{eq:react-diffus-eq, undisturbed} and 
\item[(ii)] these local stability and attractivity properties are affected only slightly in the presence of disturbances of small magnitude $\norm{u}_{\infty}$. 
\end{itemize}
In order to achieve the estimate~\eqref{eq:lISS-estimate,intro}, we will construct a suitable local input-to-state Lyapunov function $V$.
\smallskip

As far as we know, our result is the first (local) input-to-state stability result w.r.t.~attractors $\Theta$ of concrete partial differential equation systems. All previous concrete pde results 
we are aware of -- like those from~\cite{DaMi13}, \cite{JaNaPaSc16}, \cite{JaSc18}, \cite{KaKr16}, \cite{KaKr17}, \cite{MaPr11}, \cite{MiKaKr17}, \cite{ScZw18}, \cite{TaPrTa17}, \cite{ZhZh17a}, \cite{ZhZh17b}, for instance -- 
establish input-to-state stability only w.r.t.~an equilibrium point $\theta$, which without loss of generality is assumed to be $\theta = 0$. In particular, all these previous results require their   nonlinearity $g$ to be such that $g(\theta) = g(0) = 0$ and such that the undisturbed system has the singleton $\Theta := \{\theta\} = \{0\}$ as an attractor.  
With our result, by contrast, we can treat much more general nonlinearities: 
we can treat nonlinearities $g$ with $g(0) \ne 0$ and, more importantly, nonlinearities $g$ for which the undisturbed system~\eqref{eq:react-diffus-eq, undisturbed} has only a non-singleton attractor $\Theta \supsetneq \{0\}$. A simple example of such a nonlinearity is given by $g(r) := -r^3 + r$, which leads to the Chaffee--Infante equation. 
We refer to~\cite{KaKaVa15}, \cite{GoKaKaPa14}, \cite{GoKakaKh15} \cite{DaKaRo17} for other interesting results about non-trivial global attractors of nonlinear, impulsive, or even multi-valued semigroups. 
\smallskip

In the entire paper, we will use the following conventions and notations. As above, $X := L^2(\Omega,\R)$ and $\mathcal{U} := L^{\infty}(\R^+_0,\R)$ with $\R^+_0 := [0,\infty)$ and with the standard norm of $X$ being denoted simply by $\norm{\cdot} := \norm{\cdot}_{L^2(\Omega)}$. As usual, 
\begin{align*}
B_r(x_0) = B_r^{X}(x_0), \quad \ol{B}_r(x_0) = \ol{B}_r^{X}(x_0) \quad \text{and} \quad B_r(u_0) = B_r^{\mathcal{U}}(u_0), \quad  \ol{B}_r(u_0) = \ol{B}_r^{\mathcal{U}}(u_0)
\end{align*}
denote the open and closed balls in $X$ or $\mathcal{U}$ of radius $r$ around $x_0 \in X$ or $u_0 \in \mathcal{U}$ respectively. We will often use the notation~\eqref{eq:norm-Theta,def} and
\begin{align*}
B_r(\Theta) := \{x \in X: \norm{x}_{\Theta} < r \} 
\qquad \text{and} \qquad
\ol{B}_r(\Theta) := \{x \in X: \norm{x}_{\Theta} \le r \},
\end{align*}
as well as the notation 
$\dist(M,\Theta) := \sup_{x\in M} \norm{x}_{\Theta}$
for subsets $M, \Theta \subset X$. Also, $\mathcal{K}$, $\mathcal{K}_{\infty}$ and $\mathcal{KL}$ will denote the following standard classes of comparison functions:
\begin{gather}
\mathcal{K} := \{ \gamma \in C(\R^+_0,\R^+_0): \gamma \text{ strictly increasing with } \gamma(0) = 0 \} \notag \\
\mathcal{K}_{\infty} := \{ \gamma \in \mathcal{K}: \gamma \text{ unbounded} \}  \label{eq:comparison-fct-classes}\\
\mathcal{KL} := \{ \beta \in C(\R^+_0 \times \R^+_0,\R^+_0): \beta(\cdot,t) \in \mathcal{K} \text{ for } t \ge 0 \text{ and } \beta(s,\cdot) \in \mathcal{L} \text{ for } s > 0 \}, \notag
\end{gather}
where $\mathcal{L} := \{ \gamma \in C(\R^+_0,\R^+_0): \gamma \text{ strictly decreasing with } \lim_{t\to\infty} \gamma(t) = 0 \}$. And finally, upper right-hand Dini derivatives will be denoted by
\begin{align*}
\ol{\partial}_t^+ v(t) := \varlimsup_{\tau \to 0+} \frac{v(t+\tau)-v(t)}{\tau}.
\end{align*}

\section{Some preliminaries}

In this section, we provide 
the necessary preliminaries for our local input-to-state stability result. 
We begin by recalling the definition of weak solutions of initial boundary value problems of the form 
\begin{align} \label{eq:ibvp}
\begin{split}
\partial_t y(t,\zeta) &= \Delta y(t,\zeta) + g(y(t,\zeta)) + h(\zeta) u(t) \qquad ((t,\zeta) \in [s,\infty) \times \Omega) \\
y(t,\cdot)|_{\partial \Omega} &= 0 \qquad \text{and} \qquad y(s,\cdot) = y_s \qquad (t \in [s,\infty)).
\end{split} 
\end{align}
In fact, we will have to consider initial boundary value problems with more general inhomogeneities of the form 
\begin{align} \label{eq:ibvp-general}
\begin{split}
\partial_t y(t,\zeta) &= \Delta y(t,\zeta) + \ol{g}(y(t,\zeta)) + \ol{h}(t,\zeta)  \qquad ((t,\zeta) \in [s,\infty) \times \Omega) \\
y(t,\cdot)|_{\partial \Omega} &= 0 \qquad \text{and} \qquad y(s,\cdot) = y_s \qquad (t \in [s,\infty)),
\end{split} 
\end{align}
where $\ol{g}$, $\ol{h}$ satisfy the following conditions.

\begin{cond} \label{cond:ol-g,ol-h}
\begin{itemize}
\item[(i)] $\Omega$ is a bounded domain in $\R^d$ for some $d \in \N$ with smooth boundary $\partial \Omega$ 
and, moreover, $p \in [2,\infty)$, $q \in (1,2]$ are dual exponents: $1/p + 1/q = 1$
\item[(ii)] $\ol{g} \in C^1(\R,\R)$ and there exist constants $\alpha_1, \alpha_2, \kappa, \lambda \in (0,\infty)$ such that
\begin{align} \label{eq:growth-cond-ol-g}
-\kappa - \alpha_1 |r|^p \le \ol{g}(r)r \le \kappa - \alpha_2 |r|^p
\qquad \text{and} \qquad
\ol{g}'(r) \le \lambda
\qquad (r \in \R)
\end{align}
and, moreover, $\ol{h} \in L^q_{\mathrm{loc}}(\R^+_0,L^q(\Omega))$. 
\end{itemize}
\end{cond}

A bit more explicitly, the first two inequalities in~\eqref{eq:growth-cond-ol-g} mean that $\ol{g}|_{(0,\infty)}$ 
lies between $r \mapsto -\kappa/|r| - \alpha_1 |r|^{p-1}$ and $r \mapsto \kappa/|r| - \alpha_2 |r|^{p-1}$ 
and that $\ol{g}|_{(-\infty,0)}$ lies between $r \mapsto -\kappa/|r| + \alpha_2 |r|^{p-1}$ and $r \mapsto \kappa/|r| + \alpha_1 |r|^{p-1}$. 
A simple class of functions $\ol{g}$ satisfying the three inequalities from~\eqref{eq:growth-cond-ol-g} 
is given by the polynomials of odd degree with negative leading coefficient:
\begin{align*}
\ol{g}(r) = \sum_{i=0}^{2m-1} c_i r^{i} \qquad (r \in \R)
\end{align*} 
with $c_{2m-1} < 0$, where $m \in \N$. (Choose $p := 2m$.) 
In particular, the nonlinearity of the Chaffee--Infante equation given by $\ol{g}(r) := -r^3 + r$ falls into that class (Section~11.5 of~\cite{Robinson}). 
\smallskip 

Suppose that Condition~\ref{cond:ol-g,ol-h} is satisfied and let $s \in \R^+_0$ and $y_s \in X$. A function $y \in C([s,\infty),X)$ is 
called a \emph{global weak solution of 
\eqref{eq:ibvp-general}} iff $y(s) = y_s$ and for every $T \in (s,\infty)$ one has
\begin{align}
y|_{[s,T]} \in L^2([s,T],H^1_0(\Omega)) \cap L^p([s,T], L^p(\Omega))
\end{align}
and there exists a (then unique) $z \in L^2([s,T],H^1_0(\Omega)^*) + L^q([s,T], L^q(\Omega))$ such that
\begin{align} \label{eq:def-weak-sol}
\int_s^T \big( z(t), \phi(t)\big) \d t &= -\int_s^T \int_{\Omega} \nabla y(t)(\zeta) \cdot \nabla \phi(t)(\zeta) \d \zeta \d t + \int_s^T \int_{\Omega} \ol{g}\big( y(t)(\zeta) \big) \, \phi(t)(\zeta) \d \zeta \d t  \notag \\
&\quad + \int_s^T \int_{\Omega} \ol{h}(t)(\zeta) \, \phi(t)(\zeta)  \d \zeta \d t 
\end{align}
for every $\phi \in L^2([s,T],H^1_0(\Omega)) \cap L^p([s,T], L^p(\Omega))$. See~\cite{VaKa06} or~\cite{KaVa09} and, for more background information, \cite{ChVi96} or~\cite{ChVi}. In this equation, $(\cdot,\cdot \cdot)$ stands for the dual pairing of $H^1_0(\Omega)^* + L^q(\Omega)$ and $H^1_0(\Omega) \cap L^p(\Omega)$, that is, 
\begin{align} \label{eq:dual-pairing}
(z,\phi) = (z_1,\phi)_{H^1_0(\Omega)^*,H^1_0(\Omega)} + (z_2,\phi)_{L^q(\Omega),L^p(\Omega)}
\end{align}
for every $z = z_1+z_2 \in H^1_0(\Omega)^* + L^q(\Omega)$ and $\phi \in H^1_0(\Omega) \cap L^p(\Omega)$, where $(\cdot,\cdot\cdot)_{H^1_0(\Omega)^*,H^1_0(\Omega)}$ and $(\cdot,\cdot\cdot)_{L^q(\Omega),L^p(\Omega)}$ denote the respective dual pairings. See~\cite{BeLoe} (Theorem~2.7.1) and \cite{DiUh} (Theorem~IV.1.1 and Corollary~III.2.13), for instance, to get that $H^1_0(\Omega)^* + L^q(\Omega)$, $H^1_0(\Omega) \cap L^p(\Omega)$ and
\begin{align*}
L^2([s,T],H^1_0(\Omega)^*) + L^q([s,T], L^q(\Omega)), \quad L^2([s,T],H^1_0(\Omega)) \cap L^p([s,T], L^p(\Omega))
\end{align*} 
are dual to each other. 
We point out that if $y$ is a global weak solution to~\eqref{eq:ibvp-general}, then for every $T \in (s,\infty)$ there is only one $z \in L^2([s,T],H^1_0(\Omega)^*) + L^q([s,T], L^q(\Omega))$ satisfying~\eqref{eq:def-weak-sol}. And this $z$ is called the \emph{weak or generalized derivative} of $y|_{[s,T]}$. It is denoted by $\partial_t y|_{[s,T]}$ or simply by $\partial_t y$ in the following.

\begin{lm} \label{lm:weak-sol-general}
Suppose that Condition~\ref{cond:ol-g,ol-h} is satisfied and let $s \in \R^+_0$ and $y_s \in X$. Then the initial boundary value problem~\eqref{eq:ibvp-general} has a unique global weak solution $y$ and, moreover, $t \mapsto \norm{y(t)}^2$ is absolutely continuous (hence differentiable almost everywhere) with
\begin{align}
\ddt \norm{y(t)}^2 = 2 \big( \partial_t y(t),y(t) \big) 
\end{align} 
for almost every $t \in [s,\infty)$, where $(\cdot,\cdot \cdot)$ is the dual pairing from~\eqref{eq:dual-pairing}. 
\end{lm}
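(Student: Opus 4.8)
The plan is to recast the initial boundary value problem~\eqref{eq:ibvp-general} as an abstract parabolic evolution equation in the Gelfand (evolution) triple
\begin{align*}
V := H^1_0(\Omega) \cap L^p(\Omega) \hookrightarrow H := X = L^2(\Omega) \hookrightarrow V^* = H^1_0(\Omega)^* + L^q(\Omega),
\end{align*}
whose embeddings are continuous and dense because $\Omega$ is bounded and $p \ge 2$. Setting $A(y) := -\Delta y - \ol{g}(y) \in V^*$, the equation reads $\partial_t y + A(y) = \ol{h}$ with $y(s) = y_s$, and a global weak solution in the sense of~\eqref{eq:def-weak-sol} is exactly a solution of this abstract equation with the stated regularity. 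The three inequalities in~\eqref{eq:growth-cond-ol-g} furnish the structural properties that drive the argument: the upper bound $\ol{g}(r)r \le \kappa - \alpha_2 |r|^p$ gives coercivity, since $(A(y),y) \ge \norm{\nabla y}^2 + \alpha_2 \norm{y}_{L^p(\Omega)}^p - \kappa |\Omega|$ controls the full $V$-norm; the two-sided growth bound yields $|\ol{g}(r)| \le c\,(1 + |r|^{p-1})$, which together with the dual-exponent identity $q(p-1) = p$ produces the growth estimate $\norm{A(y)}_{V^*} \le c\,(1 + \norm{y}_V^{p-1})$; and $\ol{g}'(r) \le \lambda$ makes $r \mapsto -\ol{g}(r) + \lambda r$ nondecreasing, so that $A$ is monotone up to the shift $\lambda\,\id$.

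For existence I would run a Faedo--Galerkin approximation in the eigenbasis of the Dirichlet Laplacian (whose elements lie in every $L^r(\Omega)$), solving the resulting finite-dimensional Carath\'eodory system on $[s,T]$. Testing the Galerkin equation against the approximation $y_n$ and using coercivity together with Young's inequality to absorb the $\ol{h}$-term, I obtain the uniform a priori bound
\begin{align*}
\tfrac{1}{2}\,\ddt{\norm{y_n(t)}^2} + \norm{\nabla y_n(t)}^2 + \tfrac{\alpha_2}{2}\norm{y_n(t)}_{L^p(\Omega)}^p \le \kappa |\Omega| + c\,\norm{\ol{h}(t)}_{L^q(\Omega)}^q,
\end{align*}
which after integration gives global solvability of the Galerkin system and uniform bounds in $L^\infty([s,T],H) \cap L^2([s,T],H^1_0(\Omega)) \cap L^p([s,T],L^p(\Omega))$, together with a bound on $\partial_t y_n$ in $L^2([s,T],H^1_0(\Omega)^*) + L^q([s,T],L^q(\Omega))$ via the growth estimate. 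Passing to weak and weak-$*$ limits and invoking the Aubin--Lions lemma to extract a subsequence converging strongly in $L^2([s,T] \times \Omega)$, the continuity of $\ol{g}$ gives $\ol{g}(y_n) \to \ol{g}(y)$ almost everywhere; combined with the $L^q$-growth bound this forces $\ol{g}(y_n) \rightharpoonup \ol{g}(y)$, so that the linear term converges weakly and the nonlinear term is correctly identified. Since the bounds hold on every finite interval, the limit $y$ is a global weak solution.

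The energy equality is the crux. I would invoke (or, by mollification in time, establish) the Lions-type chain rule adapted to the intersection--sum structure of the triple: whenever $y \in L^2([s,T],H^1_0(\Omega)) \cap L^p([s,T],L^p(\Omega))$ has generalized derivative $\partial_t y \in L^2([s,T],H^1_0(\Omega)^*) + L^q([s,T],L^q(\Omega))$, it admits a representative in $C([s,T],H)$ and $t \mapsto \norm{y(t)}^2$ is absolutely continuous with $\ddt{\norm{y(t)}^2} = 2\,(\partial_t y(t),y(t))$ for almost every $t$. For a single Gelfand triple this mollification argument is classical; the main technical obstacle here is to check that temporal smoothing is compatible with the sum--intersection duality~\eqref{eq:dual-pairing}, so that both summands of the pairing pass to the limit and yield the stated identity. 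This is precisely the point worked out in~\cite{VaKa06, KaVa09}.

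Finally, uniqueness follows from the energy equality. Given two solutions $y_1, y_2$, the difference $w := y_1 - y_2$ satisfies $w(s) = 0$ and $\partial_t w = \Delta w + \ol{g}(y_1) - \ol{g}(y_2)$; applying the chain rule to $w$ and using the mean value theorem with $\ol{g}'(r) \le \lambda$ gives
\begin{align*}
\ddt{\norm{w(t)}^2} = 2\,(\partial_t w(t), w(t)) = -2\norm{\nabla w(t)}^2 + 2\int_\Omega \big( \ol{g}(y_1) - \ol{g}(y_2) \big)\, w \,\d\zeta \le 2\lambda \norm{w(t)}^2,
\end{align*}
so that $w \equiv 0$ by Gronwall's inequality.
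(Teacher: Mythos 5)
Your proposal is mathematically sound, but it takes a very different route from the paper in terms of presentation: the paper does not reprove well-posedness at all. Its entire proof consists of deriving the polynomial growth bound $|\ol{g}(r)| \le C_1(1+|r|^{p-1})$ from the two-sided sign condition in~\eqref{eq:growth-cond-ol-g} together with continuity of $\ol{g}$ on $[-1,1]$, observing that the dissipativity and one-sided Lipschitz hypotheses (conditions (2)--(4) of~\cite{VaKa06}) are then satisfied, and citing the results of Section~2 of~\cite{VaKa06} for existence, uniqueness, and the energy identity. What you have written is essentially a reconstruction of the argument \emph{inside} that reference: the Faedo--Galerkin scheme in the triple $H^1_0(\Omega)\cap L^p(\Omega) \hookrightarrow L^2(\Omega) \hookrightarrow H^1_0(\Omega)^* + L^q(\Omega)$, the a priori bound from coercivity and Young's inequality, Aubin--Lions compactness plus a.e.\ convergence and $L^q$-boundedness to identify $\ol{g}(y_n) \rightharpoonup \ol{g}(y)$, the Lions-type chain rule adapted to the sum--intersection duality~\eqref{eq:dual-pairing}, and Gr\"onwall for uniqueness. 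All of these steps are correct, and you rightly identify the chain-rule lemma for the pairing~\eqref{eq:dual-pairing} as the genuine technical crux -- which is also the one step you (like the paper) ultimately delegate to~\cite{VaKa06} and~\cite{KaVa09} rather than carrying out the time-mollification yourself. The trade-off is clear: your version makes the lemma self-contained up to that one classical ingredient and exposes exactly where each of the three inequalities in~\eqref{eq:growth-cond-ol-g} enters, whereas the paper's version is a two-line hypothesis check whose only nontrivial content is the derivation of~\eqref{eq:g(y)-in-Lq}. One small remark: the monotonicity of $A + \lambda\,\id$ that you mention is not actually needed for identifying the nonlinear limit (the compactness argument already does that); it is used only in your uniqueness step, where it appears in the equivalent form $(\ol{g}(r)-\ol{g}(s))(r-s) \le \lambda|r-s|^2$.
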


\begin{proof}
It is clear from the first two inequalities in~\eqref{eq:growth-cond-ol-g} that
\begin{align} \label{eq:estimate-g(r)r}
|\ol{g}(r)r| \le \kappa + \alpha_1|r|^p \qquad (r \in \R).
\end{align} 
Since $\sup_{|r|\le 1} |\ol{g}(r)| < \infty$ by the continuity of $\ol{g}$, it follows from~\eqref{eq:estimate-g(r)r} that for some constant $C_1 \in (0,\infty)$
\begin{align} \label{eq:g(y)-in-Lq}
|\ol{g}(r)| \le C_1 (1+|r|^{p-1}) \qquad (r \in \R)
\end{align}
and therefore condition~(2) from~\cite{VaKa06} is satisfied. Also, in view of the second and third inequalities in~\eqref{eq:growth-cond-ol-g}, condition~(3) and condition~(4) from~\cite{VaKa06} with $M=0$ are satisfied. 
Consequently, the assertions of the lemma follow from the remarks made in Section~2 (up to Remark~1) of~\cite{VaKa06}. 
%
\end{proof}

With this lemma at hand, it is easy to see that the initial boundary value problem~\eqref{eq:ibvp} generates a semiprocess family $(S_u)_{u\in\mathcal{U}}$ on $X$ (Lemma~\ref{lm:semiproc}).  A \emph{semiprocess family on $X$} is a family of maps $S_u: \Delta \times X \to X$ for every $u \in \mathcal{U}$ such that
\begin{gather}
S_u(s,s,x) = x \qquad \text{and} \qquad S_u\big(t,s, S_u(s,r,x) \big) = S(t,r,x) \label{eq:def-semiproc-1}\\
S_u(t+\tau,s+\tau,x) = S_{u(\cdot+\tau)}(t,s,x) \label{eq:def-semiproc-2}
\end{gather} 
for all $(t,s), (s,r) \in \Delta$, $\tau \in \R^+_0$, $x \in X$ and $u \in \mathcal{U}$, where we used the abbreviation $\Delta := \{(s,t) \in \R^+_0 \times \R^+_0: t \ge s\}$. See~\cite{ChVi}, for instance, for more information on semiprocess families. 

\begin{cond} \label{cond:g,h}
\begin{itemize}
\item[(i)] $\Omega$ is a bounded domain in $\R^d$ for some $d \in \N$ with smooth boundary $\partial \Omega$ 
and, moreover, $p \in [2,\infty)$
\item[(ii)] $g \in C^1(\R,\R)$ and there exist constants $\alpha_1, \alpha_2, \kappa, \lambda \in (0,\infty)$ such that
\begin{align} \label{eq:cond-g}
-\kappa - \alpha_1 |r|^p \le g(r)r \le \kappa - \alpha_2 |r|^p
\qquad \text{and} \qquad
g'(r) \le \lambda
\qquad (r \in \R)
\end{align}
and, moreover, $h \in X \setminus \{0\}$. 
\end{itemize}
\end{cond}

\begin{lm} \label{lm:semiproc}
Suppose that Condition~\ref{cond:g,h} is satisfied. Then for every $s \in \R^+_0$ and every $(y_s,u) \in X \times \mathcal{U}$ the initial boundary value problem~\eqref{eq:ibvp} has a unique global weak solution $y(\cdot,s,y_s,u)$. Additionally, $(S_u)_{u\in\mathcal{U}}$ defined by
\begin{align} \label{eq:S_u-def}
S_u(t,s,y_s) := y(t,s,y_s,u)
\end{align}
is a semiprocess family on $X$. 
\end{lm}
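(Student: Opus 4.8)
The plan is to reduce the problem to the general inhomogeneous equation~\eqref{eq:ibvp-general} and invoke Lemma~\ref{lm:weak-sol-general}, and then to derive the three semiprocess identities from uniqueness together with the autonomy of the equation.

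First, for fixed $(s,y_s,u)$, I would set $\ol{g} := g$ and $\ol{h}(t) := u(t)\,h$ and verify that Condition~\ref{cond:ol-g,ol-h} holds. The growth and one-sided Lipschitz inequalities on $\ol{g}$ are literally Condition~\ref{cond:g,h}(ii), so nothing is to be checked there. It then remains to choose a dual exponent $q \in (1,2]$ to the given $p$ and to confirm that $\ol{h} \in L^q_{\mathrm{loc}}(\R^+_0,L^q(\Omega))$. Since $\Omega$ is bounded and $q \le 2$, Hölder's inequality gives the embedding $L^2(\Omega) \hookrightarrow L^q(\Omega)$, so $h \in L^q(\Omega)$; and since $u \in L^{\infty}$, for every $T \in (s,\infty)$ one has $\int_s^T \norm{\ol{h}(t)}_{L^q}^q \d t \le \norm{u}_{\infty}^q \norm{h}_{L^q}^q (T-s) < \infty$. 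Lemma~\ref{lm:weak-sol-general} then yields the unique global weak solution $y(\cdot,s,y_s,u)$, which establishes the first assertion.

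Next I would turn to the semiprocess identities. The initial-value identity $S_u(s,s,y_s)=y_s$ is immediate from the requirement $y(s)=y_s$ in the definition of a weak solution. For the cocycle identity~\eqref{eq:def-semiproc-1}, fix $r \le s \le t$ and observe that the restriction of the weak solution $y(\cdot,r,y_r,u)$ to $[s,\infty)$ is again a weak solution with initial value $y(s,r,y_r,u)$: restricting the integral identity~\eqref{eq:def-weak-sol} to test functions supported in $[s,T]$ shows that the weak derivative localizes correctly and that all the membership conditions are inherited. By the uniqueness part of Lemma~\ref{lm:weak-sol-general}, this restriction must coincide with $y(\cdot,s,y(s,r,y_r,u),u)$, which is precisely~\eqref{eq:def-semiproc-1}.

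Finally, for the translation identity~\eqref{eq:def-semiproc-2}, I would exploit that the equation is autonomous apart from the input. Given $\tau \ge 0$, set $\tilde{y}(t) := y(t+\tau,s+\tau,y_s,u)$; performing the substitution $t \mapsto t+\tau$ in~\eqref{eq:def-weak-sol} — which shifts the test functions and the weak derivative $z = \partial_t y$ by the same amount — shows that $\tilde{y}$ is a weak solution on $[s,\infty)$ driven by the shifted input $u(\cdot+\tau)$ with $\tilde{y}(s)=y_s$. Uniqueness then identifies $\tilde{y}$ with $y(\cdot,s,y_s,u(\cdot+\tau))$, which is~\eqref{eq:def-semiproc-2}. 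I expect the only mildly delicate point to be the bookkeeping in these last two steps, namely checking that restriction and time-shift transform the weak derivative correctly and that the transformed test function still ranges over the admissible space $L^2([s,T],H^1_0(\Omega)) \cap L^p([s,T],L^p(\Omega))$; but once the substitution in~\eqref{eq:def-weak-sol} is written out, each verification is routine, so the genuinely substantial analytic content stays confined to the already-established Lemma~\ref{lm:weak-sol-general}.
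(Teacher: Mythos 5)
Your proposal is correct and follows the same route as the paper: reduce to Lemma~\ref{lm:weak-sol-general} with $\ol{g}:=g$ and $\ol{h}(t):=u(t)h$ (using $L^2_{\mathrm{loc}}(\R^+_0,L^2(\Omega))\subset L^q_{\mathrm{loc}}(\R^+_0,L^q(\Omega))$ on the bounded domain), then obtain the semiprocess identities from the definition of weak solutions and uniqueness. The paper leaves the restriction and time-shift bookkeeping implicit, whereas you spell it out; there is no substantive difference.
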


\begin{proof}
In order to see the unique global weak solvability, simply apply Lemma~\ref{lm:weak-sol-general} with $\ol{g}:=g$ and with $\ol{h} \in L^2_{\mathrm{loc}}(\R^+_0,L^2(\Omega)) \subset L^q_{\mathrm{loc}}(\R^+_0,L^q(\Omega))$ defined by $\ol{h}(t)(\zeta) := h(\zeta) u(t)$. 
In order to see the semiprocess property, use the definition of weak solutions and the uniqueness statement from Lemma~\ref{lm:weak-sol-general}. 
\end{proof}

In the following, $(S_u)_{u\in\mathcal{U}}$ will always denote the semiprocess family from the previous lemma. Also, we will often refer to $(S_u)_{u\in \mathcal{U}}$ and $S_0$ as the disturbed and the undisturbed system, respectively. 
In proving our local input-to-state stability result, the following estimates will play an important role. 

\begin{lm} \label{lm:semiproc-estimates}
Suppose that Condition~\ref{cond:g,h} is satisfied. Then 
\begin{align}
\norm{S_0(t,0,y_{01})-S_0(t,0,y_{02})} &\le \e^{\lambda t} \norm{y_{01}-y_{02}} \qquad (t \in \R^+_0) \label{eq:semiproc-estimate-1}\\
\norm{S_u(t,0,y_0)-S_0(t,0,y_0)} &\le 2 \e^{2 \lambda} \norm{h} \norm{u}_{\infty} t \qquad (t \in [0,1]) \label{eq:semiproc-estimate-2}
\end{align} 
for all $y_0, y_{01}, y_{02} \in X$ and all $u \in \mathcal{U}$.
\end{lm}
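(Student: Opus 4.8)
The plan is to prove both estimates via energy arguments on the difference of two solutions, using Lemma~\ref{lm:weak-sol-general} to differentiate the squared $X$-norm and then exploiting the one-sided Lipschitz bound $g'(r) \le \lambda$ from~\eqref{eq:cond-g} together with Gr\"onwall's inequality.

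For~\eqref{eq:semiproc-estimate-1}, set $w(t) := S_0(t,0,y_{01}) - S_0(t,0,y_{02})$. Both $y_i(t) := S_0(t,0,y_{0i})$ are weak solutions of the undisturbed problem, so $w$ is a weak solution of a linear parabolic problem whose nonlinear term is $g(y_1) - g(y_2)$; by Lemma~\ref{lm:weak-sol-general} the map $t \mapsto \norm{w(t)}^2$ is absolutely continuous with $\ddt \norm{w(t)}^2 = 2(\partial_t w(t), w(t))$. Using the weak-form identity~\eqref{eq:def-weak-sol} to evaluate the pairing, I would obtain
\begin{align*}
\tfrac12 \ddt \norm{w(t)}^2 = -\norm{\nabla w(t)}^2 + \int_\Omega \big(g(y_1(t)(\zeta)) - g(y_2(t)(\zeta))\big)\, w(t)(\zeta) \d\zeta.
\end{align*}
The key pointwise step is the mean value theorem: $g(y_1) - g(y_2) = g'(\xi)(y_1 - y_2)$ for some intermediate $\xi$, so the integrand is $g'(\xi)\, w^2 \le \lambda\, w^2$ by~\eqref{eq:cond-g}. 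Dropping the nonpositive gradient term yields $\ddt \norm{w(t)}^2 \le 2\lambda \norm{w(t)}^2$, and Gr\"onwall gives $\norm{w(t)}^2 \le \e^{2\lambda t}\norm{w(0)}^2$, which is exactly~\eqref{eq:semiproc-estimate-1} after taking square roots.

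For~\eqref{eq:semiproc-estimate-2}, set $v(t) := S_u(t,0,y_0) - S_0(t,0,y_0)$, so that $v$ solves weakly a problem with nonlinear term $g(y_u) - g(y_0)$ and additional inhomogeneity $h\, u(t)$. The same differentiation gives
\begin{align*}
\tfrac12 \ddt \norm{v(t)}^2 \le \lambda \norm{v(t)}^2 + \int_\Omega h(\zeta)\, u(t)\, v(t)(\zeta) \d\zeta,
\end{align*}
and the last term is bounded by $\norm{h}\,\norm{u}_\infty\,\norm{v(t)}$ via Cauchy--Schwarz. This is a differential inequality for $\norm{v}^2$ driven by the unknown $\norm{v}$ itself; the clean route is to derive from it a differential inequality for $\norm{v(t)}$ of the form $\ddt \norm{v(t)} \le \lambda \norm{v(t)} + \norm{h}\norm{u}_\infty$ (valid where $\norm{v}>0$, with the $t=0$ initial value $v(0)=0$), and then integrate via Gr\"onwall to get $\norm{v(t)} \le \norm{h}\norm{u}_\infty \tfrac{\e^{\lambda t}-1}{\lambda}$. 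Restricting to $t \in [0,1]$ and bounding $\tfrac{\e^{\lambda t}-1}{\lambda} \le t\,\e^{\lambda t} \le t\,\e^{\lambda} \le 2\e^{2\lambda} t$ crudely recovers the stated constant; the factor $2\e^{2\lambda}$ is deliberately generous, so no sharp tracking of constants is needed.

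The main technical obstacle is the rigorous justification of the energy identity for the \emph{difference} $w$ (resp.\ $v$) rather than for a single solution: one must check that $w$ is itself a weak solution of an admissible problem so that Lemma~\ref{lm:weak-sol-general} applies, and that the pairing $(\partial_t w, w)$ legitimately expands into the spatial integrals above — i.e.\ that $w$ is an admissible test function for its own weak formulation, which rests on the duality between $L^2H^1_0 \cap L^pL^p$ and its dual recalled after~\eqref{eq:dual-pairing}. A minor subtlety in~\eqref{eq:semiproc-estimate-2} is the passage from the inequality for $\norm{v}^2$ to one for $\norm{v}$ at points where $\norm{v(t)} = 0$; this is handled by a standard regularization (e.g.\ working with $\sqrt{\norm{v}^2 + \eps}$ and letting $\eps \to 0^+$) or directly via the Dini-derivative comparison principle, so it poses no real difficulty.
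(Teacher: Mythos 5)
Your proposal is correct and follows essentially the same route as the paper: an energy estimate for the difference of two solutions, the one-sided Lipschitz bound $(g(r)-g(s))(r-s)\le\lambda|r-s|^2$ coming from $g'\le\lambda$, and Gr\"onwall. The only differences are minor and technical: the paper proves both estimates at once by showing that $y^u_1-y^0_2$ is a weak solution of~\eqref{eq:ibvp-general} with $\ol{g}:=g$ and the residual $g(y^u_1)-g(y^0_2)-g(y^u_1-y^0_2)+hu$ absorbed into $\ol{h}$ (which is precisely the admissibility device resolving the obstacle you flag), and it avoids your division-by-$\norm{v}$ regularization by bounding $\int_0^T\norm{v(t)}\d t\le T\sup_{[0,T]}\norm{v}$ and running Gr\"onwall at the level of $\norm{v}^2$.
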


\begin{proof}
As a first step, we show that for every $y_{01},y_{02} \in X$ and $u \in \mathcal{U}$ the function 
\begin{align} \label{eq:y-12-u}
y_{12}^{u} := y_1^{u}-y_2^0 \qquad \text{with} \qquad y_1^{u} := S_u(\cdot,0,y_{01}) \qquad \text{and} \qquad y_2^0 := S_0(\cdot,0,y_{02})
\end{align}
is a global weak solution of the initial boundary value problem
\begin{align} \label{eq:semiproc-estimates-ibvp-general}
\begin{split}
\partial_t y(t,\zeta) &= \Delta y(t,\zeta) + \ol{g}(y(t,\zeta)) + \ol{h}(t,\zeta)  \qquad ((t,\zeta) \in [0,\infty) \times \Omega) \\
y(t,\cdot)|_{\partial \Omega} &= 0 \qquad \text{and} \qquad y(0,\cdot) = y_{01}-y_{02} \qquad (t \in [0,\infty)),
\end{split} 
\end{align}
where $\ol{g} := g$ and $\ol{h}(t)(\zeta) := g(y_1^{u}(t)(\zeta)) - g(y_2^{0}(t)(\zeta)) - g(y_{12}^{u}(t)(\zeta)) + h(\zeta)u(t)$. 
So, let $y_{01},y_{02} \in X$ and $u \in \mathcal{U}$ and adopt the abbreviations from~\eqref{eq:y-12-u}. It is not difficult -- using~\eqref{eq:g(y)-in-Lq} and $q(p-1)=p$ -- to see from Condition~\ref{cond:g,h} that with $\ol{g}$, $\ol{h}$ as defined above, Condition~\ref{cond:ol-g,ol-h} is satisfied.  
Since $y_1^{u}$, $y_2^0$ are global weak solutions, we have $y_{12}^{u} \in C(\R^+_0,X)$ and for every $T \in (0,\infty)$ we have
\begin{align*}
y_{12}^{u}|_{[0,T]} \in L^2([0,T],H^1_0(\Omega)) \cap L^p([0,T], L^p(\Omega))
\end{align*}
and $\partial_t y_{1}^{u}|_{[0,T]} - \partial_t y_{2}^{0}|_{[0,T]} \in L^2([0,T],H^1_0(\Omega)^*) + L^q([0,T], L^q(\Omega))$ as well as
\begin{align} \label{eq:semiproc-estimates-step-1}
\int_0^T \big( \partial_t y_{1}^{u}(t) &- \partial_t y_{2}^{0}(t), \phi(t) \big) \d t 
= - \int_0^T \int_{\Omega} \nabla y_{12}^{u}(t)(\zeta) \cdot \nabla \phi(t)(\zeta) \d\zeta \d t \notag \\
&+  \int_0^T \int_{\Omega} \ol{g}\big( y_{12}^{u}(t)(\zeta) \big) \, \phi(t)(\zeta) \d \zeta \d t  + \int_0^T \int_{\Omega} \ol{h}(t)(\zeta) \, \phi(t)(\zeta)  \d \zeta \d t
\end{align}
for every $\phi \in L^2([0,T],H^1_0(\Omega)) \cap L^p([0,T], L^p(\Omega))$. And therefore, $y_{12}^{u}$ is a weak solution of~\eqref{eq:semiproc-estimates-ibvp-general}, as desired.
\smallskip

As a second step, we show that for every $y_{01},y_{02} \in X$ and $u \in \mathcal{U}$ the function $y_{12}^{u}$ from~\eqref{eq:y-12-u} satisfies the estimate
\begin{align} \label{eq:semiproc-estimates-step-2}
\sup_{T \in [0,t]} \norm{y_{12}^{u}(T)}^2 \le \e^{2\lambda t} \Big( \norm{y_{01}-y_{02}}^2 + 2\norm{h}\norm{u}_{\infty} \cdot t \cdot \sup_{T \in [0,t]}  \norm{y_{12}^{u}(T)} \Big) 
\end{align}
for every $t \in \R^+_0$. 
%
Indeed, by the first step and Lemma~\ref{lm:weak-sol-general}, the function $t \mapsto \norm{y_{12}^{u}(t)}^2$ is absolutely continuous with 
\begin{align*}
\ddt \frac{\norm{y_{12}^{u}(t)}^2}{2} = \big( \partial_t y_{12}^{u}(t), y_{12}^{u}(t) \big) = \big( \partial_t y_{1}^{u}(t) - \partial_t y_{2}^{0}(t), y_{12}^{u}(t) \big) 
\end{align*}
for almost every $t \in \R^+_0$. And therefore, by virtue of~\eqref{eq:semiproc-estimates-step-1} with $\phi := y_{12}^{u}$, we get
\begin{align} \label{eq:semiproc-estimates-step-2,1}
&\frac{\norm{y_{12}^{u}(T)}^2}{2} - \frac{\norm{y_{12}^{u}(0)}^2}{2}
=
\int_0^T \big( \partial_t y_{1}^{u}(t) - \partial_t y_{2}^{0}(t) , y_{12}^{u}(t) \big)  \d t \notag \\
&\qquad \le 
\int_0^T \int_{\Omega} \big( g(y_1^{u}(t)(\zeta)) - g(y_2^{0}(t)(\zeta)) \big) y_{12}^{u}(t)(\zeta) \d\zeta \d t + \int_0^T \int_{\Omega} h(\zeta) u(t) y_{12}^{u}(t)(\zeta) \d\zeta \d t \notag \\
&\qquad \le 
\lambda \int_0^T \norm{y_{12}^{u}(t)}^2 \d t + \norm{h}\norm{u}_{\infty} \int_0^T \norm{y_{12}^{u}(t)} \d t
\end{align}
for every $T \in (0,\infty)$. In the last inequality, we used that $(g(r)-g(s)) (r-s) \le \lambda |r-s|^2$ for all $r,s \in \R$ due to~\eqref{eq:cond-g}. 
So, for every $t_0 \in (0,\infty)$, we obtain 
\begin{align*}
\norm{y_{12}^{u}(T)}^2 \le \norm{y_{01}-y_{02}}^2 + 2\norm{h}\norm{u}_{\infty} \cdot t_0 \cdot \sup_{t\in[0,t_0]} \norm{y_{12}^{u}(t)} + 2 \lambda \int_0^T \norm{y_{12}^{u}(t)}^2 \d t
\end{align*}
for every $T \in [0,t_0]$. And from this, in turn, 
the claimed estimate~\eqref{eq:semiproc-estimates-step-2} immediately follows by Gr\"onwall's lemma.  
\smallskip

As a third step, it is now easy to conclude the desired estimates~\eqref{eq:semiproc-estimate-1} and~\eqref{eq:semiproc-estimate-2} from the second step.
Indeed, \eqref{eq:semiproc-estimate-1} immediately follows from~\eqref{eq:semiproc-estimates-step-2} with the special choice $u := 0 \in \mathcal{U}$ and~\eqref{eq:semiproc-estimate-2} follows from~\eqref{eq:semiproc-estimates-step-2} with the special choice $y_{01} = y_{02} := y_0 \in X$.  
\end{proof}

We remark for later reference that our semiprocess family $(S_u)_{u\in\mathcal{U}}$, like any other semiprocess family~\cite{ScKaDa19-wAG},
satisfies the following so-called cocycle property:
\begin{align}
S_u(t+\tau,0,x) = S_{u(\cdot+\tau)}\big(t,0,S_u(\tau,0,x)\big)
\end{align}
for all $t, \tau \in \R^+_0$, $x \in X$ and $u \in \mathcal{U}$. (Just combine~\eqref{eq:def-semiproc-1} and~\eqref{eq:def-semiproc-2} to see this.) In particular, $S_0$ satisfies the following (nonlinear) semigroup property~\cite{Miyadera}: 
\begin{align} \label{eq:S0-sgr}
S_0(t+\tau,0,x) = S_0\big(t,0,S_0(\tau,0,x)\big) \qquad (t,\tau \in \R^+_0 \text{ and } x \in X).
\end{align} 

We conclude this section with some remarks on the asymptotic behavior 
of this semigroup $S_0$ in terms of attractors~\cite{Robinson}, \cite{Temam}. 
A \emph{global attractor} of $S_0$ is a compact subset $\Theta$ of $X$ such that
\begin{itemize}
\item[(i)] $\Theta$ is invariant under $S_0$, that is, $S_0(t,0,\Theta) = \Theta$ for every $t \in \R^+_0$
\item[(ii)] $\Theta$ is uniformly attractive for $S_0$, that is, for every bounded subset $B \subset X$ one has
\begin{align} \label{eq:uniform-attractivity-def}
\dist\big( S_0(t,0,B), \Theta \big) = \sup_{x\in B} \norm{S_0(t,0,x)}_{\Theta} \longrightarrow 0 \qquad (t \to \infty).
\end{align}
\end{itemize} 
It directly follows from this definition that a global attractor of $S_0$ is minimal among all closed uniformly attractive sets of $S_0$ and maximal among all bounded invariant sets of $S_0$. And from this, in turn, it immediately follows that if $S_0$ has any global attractor then it is already unique.

\begin{lm} \label{lm:S0-UGAS}
Suppose that Condition~\ref{cond:g,h} is satisfied. Then the undisturbed system $S_0$ has a unique global attractor $\Theta$ and, moreover, $\Theta$ is uniformly globally asymptotically stable for $S_0$, that is, there exists a comparison function $\beta_0 \in \mathcal{KL}$ such that
\begin{align} \label{eq:S0-UGAS}
\norm{S_0(t,0,x)}_{\Theta} \le \beta_0(\norm{x}_{\Theta},t) \qquad (t\in \R^+_0 \text{ and } x \in X). 
\end{align}
\end{lm}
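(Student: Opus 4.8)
The plan is to first secure the existence and uniqueness of the global attractor $\Theta$ and then to upgrade its two defining properties — invariance and uniform attractivity of bounded sets — together with the Lipschitz-type estimate~\eqref{eq:semiproc-estimate-1} into the single $\mathcal{KL}$-bound~\eqref{eq:S0-UGAS}.

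\textbf{Existence of the attractor.} This part is classical and I would mostly cite it. To get a bounded absorbing set I would test the undisturbed equation against $y$: by Lemma~\ref{lm:weak-sol-general} one has $\ddt{\norm{y(t)}^2} = 2\big(\partial_t y(t),y(t)\big)$, and inserting the upper bound $g(r)r \le \kappa - \alpha_2|r|^p$ from~\eqref{eq:cond-g} together with Poincar\'e's inequality yields a dissipative differential inequality of the form $\ddt{\norm{y}^2} \le C - c\,\norm{y}^2$; Gr\"onwall's lemma then produces a ball absorbing every bounded subset of $X$. Asymptotic compactness follows from the parabolic smoothing of the equation and the compact embedding $H^1_0(\Omega)\hookrightarrow L^2(\Omega)$, so the standard attractor-existence theorem (\cite{Robinson}, \cite{Temam}, and for this precise class~\cite{KaVa09}) provides the compact invariant uniformly attracting set $\Theta$; its uniqueness was already noted above from minimality/maximality.

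\textbf{Local and global stability.} Here the key observation is that invariance plus~\eqref{eq:semiproc-estimate-1} already give a quantitative finite-time bound. Indeed, for $x \in X$ and $\theta \in \Theta$ we have $S_0(t,0,\theta) \in S_0(t,0,\Theta) = \Theta$, so $\norm{S_0(t,0,x)}_{\Theta} \le \norm{S_0(t,0,x)-S_0(t,0,\theta)} \le \e^{\lambda t}\norm{x-\theta}$, and taking the infimum over $\theta$ gives $\norm{S_0(t,0,x)}_{\Theta} \le \e^{\lambda t}\norm{x}_{\Theta}$ for all $t\ge 0$ and $x \in X$. Combining this with the uniform attractivity~\eqref{eq:uniform-attractivity-def} of the bounded set $\ol{B}_1(\Theta)$ (bounded since $\Theta$ is compact) yields stability: given $\eps>0$, pick $T$ with $\sup_{\norm{z}_{\Theta}\le 1}\norm{S_0(t,0,z)}_{\Theta}\le\eps$ for $t\ge T$, and then $\delta := \min\{1,\e^{-\lambda T}\eps\}$ forces $\norm{S_0(t,0,x)}_{\Theta}\le\eps$ for \emph{all} $t\ge 0$ whenever $\norm{x}_{\Theta}\le\delta$ (the growth bound handles $t\in[0,T]$, attractivity handles $t\ge T$). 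I would then set $\sigma(r) := \sup_{t\ge 0}\sup_{\norm{x}_{\Theta}\le r}\norm{S_0(t,0,x)}_{\Theta}$, which is finite by the absorbing-set property and the finite-time bound, nondecreasing, and tends to $0$ as $r\to 0$ by the stability step, hence is dominated by some $\ol{\sigma}\in\mathcal{K}_\infty$; this is uniform global stability $\norm{S_0(t,0,x)}_{\Theta}\le\ol{\sigma}(\norm{x}_{\Theta})$.

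\textbf{Assembling $\beta_0$.} Reformulating~\eqref{eq:uniform-attractivity-def} over the sublevel sets $\ol{B}_r(\Theta)$ gives uniform global attractivity: for all $r,\eps>0$ there is a finite time $T(r,\eps)$ with $\norm{x}_{\Theta}\le r$ and $t\ge T(r,\eps)$ implying $\norm{S_0(t,0,x)}_{\Theta}\le\eps$. Uniform global stability together with uniform global attractivity for a forward-complete system is exactly the hypothesis of the standard comparison-function characterization of uniform global asymptotic stability, which yields the desired $\beta_0\in\mathcal{KL}$; alternatively $\beta_0$ can be built explicitly by interlacing the stability margin $\ol{\sigma}$ with the attraction-time function $T(\cdot,\cdot)$ and smoothing/monotonizing the result to land in the class~\eqref{eq:comparison-fct-classes}.

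\textbf{Main obstacle.} I expect the genuine work to be concentrated in this last step: fabricating a \emph{single} function $\beta_0$ of class $\mathcal{KL}$ out of the separate stability and attraction-time data, with the required continuity and strict monotonicity, and carefully checking that attractivity stated over \emph{bounded} sets transfers to attractivity over the balls $\ol{B}_r(\Theta)$ (which is legitimate precisely because $\Theta$ is compact). The analytic inputs (dissipativity, asymptotic compactness) are standard; the comparison-function bookkeeping is the delicate part.
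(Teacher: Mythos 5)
Your proposal is correct and follows essentially the same route as the paper: existence and uniqueness from the standard attractor theory, uniform local stability from the invariance of $\Theta$ combined with the Lipschitz estimate~\eqref{eq:semiproc-estimate-1} and the uniform attractivity of $B_1(\Theta)$, an upgrade to uniform global stability using the dissipative/absorbing-ball bound, uniform global attractivity read off from~\eqref{eq:uniform-attractivity-def} on the balls $\ol{B}_r(\Theta)$, and finally the standard ``uniform global stability plus uniform global attractivity implies a $\mathcal{KL}$-estimate'' result (the paper cites Theorem~4.2 of~\cite{Mi17} for this, and Remark~2.9 of~\cite{Mi17} for the step you carry out by hand via the sup-function $\sigma$). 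The only difference is that you inline these two citations rather than invoking them, which is harmless.
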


\begin{proof}
It is well-known that $S_0$ has a global attractor $\Theta$ (by Theorem~11.4 of~\cite{Robinson}, for instance) and that global attractors when existent are already unique (by the remarks preceding the lemma). 
So, we have only 
to show that $\Theta$ is uniformly globally asymptotically stable for $S_0$. And in order to do so, we will proceed in three steps, applying results from~\cite{Mi17} to the system $S_0 = (S_u)_{u\in\mathcal{U}_0}$ with trivial disturbance space $\mathcal{U}_0 := \{0\}$. (In this context, it should be noticed that by~\eqref{eq:S0-sgr} and the continuity of weak solutions, $(S_u)_{u\in\mathcal{U}_0}$ is a forward-complete system in the sense of~\cite{Mi17}, \cite{MiWi18}, \cite{Sc19-wISS}.)
\smallskip

As a first step, we show that $\Theta$ is uniformly globally stable for $(S_u)_{u\in\mathcal{U}_0} = S_0$, 
that is, there exists a comparison function $\sigma_0 \in \mathcal{K}$ such that
\begin{align}
\norm{S_0(t,0,x)}_{\Theta} \le \sigma_0(\norm{x}_{\Theta}) \qquad (t \in \R^+_0)
\end{align}
for every $x \in X$ (Definition~2.8 of~\cite{Mi17}). 
Indeed, it immediately follows from the invariance of $\Theta$ under $S_0$ and from the estimate~\eqref{eq:semiproc-estimate-1} that for every $\eps > 0$ and every $T \in (0,\infty)$ there exists a $\delta \in (0,1]$ such that
\begin{align*}
\norm{S_0(t,0,x)}_{\Theta} \le \inf_{\theta \in\Theta} \norm{S_0(t,0,x)-S_0(t,0,\theta)} < \eps \qquad (t \in [0,T] \text{ and } x \in B_{\delta}(\Theta)).  
\end{align*}
And from this and the uniform attractivity~\eqref{eq:uniform-attractivity-def} of $\Theta$ for $S_0$ (with $B := B_1(\Theta)$), in turn, it follows that for every $\eps > 0$ there exists a $\delta > 0$ such that
\begin{align} \label{eq:Theta-ULS}
\norm{S_0(t,0,x)}_{\Theta} < \eps \qquad (t \in \R^+_0)
\end{align}
for every $x \in B_{\delta}(\Theta)$. 
%
%
Also, it is well-known that 
\begin{align} \label{eq:dissip-estimate-S0}
\norm{S_0(t,0,x)}^2 \le \e^{-2\omega t} \norm{x}^2 + \frac{\lambda |\Omega|}{\omega} \qquad (t \in \R^+_0)
\end{align}
for all $x \in X$, where $\omega \in (0,\infty)$ is the smallest eigenvalue of $-\Delta$, the negative Dirichlet Laplacian on $\Omega$. (See the very last equation on page 286 of~\cite{Robinson}, for instance.) Since $\norm{S_0(t,0,x)}_{\Theta} \le \norm{S_0(t,0,x)} + \norm{\Theta}$ and $\norm{x} \le \norm{x}_{\Theta} + \norm{\Theta}$ with $\norm{\Theta} := \sup_{\theta \in \Theta} \norm{\theta}$, it follows from~\eqref{eq:dissip-estimate-S0}  that there exists a comparison function $\sigma \in \mathcal{K}$ and a constant $c \in (0,\infty)$ such that
\begin{align} \label{eq:Theta-Lagrange-stable}
\norm{S_0(t,0,x)}_{\Theta} \le \sigma(\norm{x}_{\Theta}) + c \qquad (t \in \R^+_0)
\end{align} 
for every $x \in X$. 
In the terminology of~\cite{Mi17}, the relations~\eqref{eq:Theta-ULS} and~\eqref{eq:Theta-Lagrange-stable} mean that $\Theta$ is uniformly locally stable and Lagrange-stable for $(S_u)_{u\in\mathcal{U}_0}$, respectively. And therefore, $\Theta$ is uniformly globally stable for $(S_u)_{u\in\mathcal{U}_0} = S_0$ by virtue of Remark~2.9 of~\cite{Mi17}, as desired. 
\smallskip

As a second step, we show 
that $\Theta$ is uniformly globally attractive for $(S_u)_{u\in\mathcal{U}_0} = S_0$, that is, for every $\eps >0$ and $r>0$ there exists a time $\tau(\eps,r) \in \R^+_0$ such that 
\begin{align} \label{eq:Theta-UGATT}
\norm{S_0(t,0,x)}_{\Theta} < \eps \qquad (t \ge \tau(\eps,r))
\end{align} 
for every $x \in \ol{B}_r(\Theta)$ (Definition~2.8 of~\cite{Mi17}). 
Indeed, this immediately follows from the uniform attractivity~\eqref{eq:uniform-attractivity-def} of $\Theta$ for $S_0$ with $B := \ol{B}_r(\Theta)$. 
\smallskip

As a third step, we can now conclude the desired uniform global asymptotic stability of $\Theta$ for $(S_u)_{u\in\mathcal{U}_0} = S_0$
from Theorem~4.2 of~\cite{Mi17} and the first two steps. 
\end{proof}

\section{A local input-to-state stability result}

In this section, we establish our local input-to-state stability result for the disturbed reaction-diffusion system~\eqref{eq:react-diffus-eq}. We begin by showing that the undisturbed system~\eqref{eq:react-diffus-eq, undisturbed} has a local Lyapunov function and, for that purpose, we will 
argue in a similar way as~\cite{Henry} (Theorem~4.2.1).

\begin{lm} \label{lm:L-fct}
Suppose that Condition~\ref{cond:g,h} is satisfied and let $\Theta$ be the global attractor of the undisturbed system $S_0$. 
Then for every $r_0 > 0$ there exists a Lipschitz continuous function $V: \ol{B}_{r_0}(\Theta) \to \R^+_0$ with Lipschitz constant $1$ and comparison functions $\ul{\psi}, \ol{\psi}, \alpha \in \mathcal{K}_{\infty}$ such that 
\begin{gather}
\ul{\psi}(\norm{x}_{\Theta}) \le V(x) \le \ol{\psi}(\norm{x}_{\Theta}) \qquad (x \in \ol{B}_{r_0}(\Theta)) \label{eq:V-coercive-Lfct}\\
\dot{V}_0(x) := \varlimsup_{t\to 0+} \frac{1}{t}\big( V(S_0(t,0,x))-V(x) \big)  \le -\alpha(\norm{x}_{\Theta}) \qquad (x \in B_{r_0}(\Theta)). \label{eq:V-Dini-derivative}
\end{gather}
\end{lm}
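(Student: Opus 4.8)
The plan is to prove the lemma by a converse-Lyapunov construction in the spirit of~\cite{Henry}, building $V$ as a look-ahead supremum of the distance to $\Theta$ along the undisturbed flow $S_0$. The two ingredients I would feed into this construction are already available: the uniform global asymptotic stability of $\Theta$ from Lemma~\ref{lm:S0-UGAS}, which provides a $\beta_0 \in \mathcal{KL}$ with $\norm{S_0(t,0,x)}_{\Theta} \le \beta_0(\norm{x}_{\Theta},t)$, and the expansion estimate~\eqref{eq:semiproc-estimate-1}, $\norm{S_0(t,0,y_{01})-S_0(t,0,y_{02})} \le \e^{\lambda t}\norm{y_{01}-y_{02}}$, which controls the dependence of the flow on the initial datum. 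Concretely, I would set
\begin{align*}
V(x) := \sup_{t\ge0} a(t)\,\norm{S_0(t,0,x)}_{\Theta} \qquad (x \in \ol{B}_{r_0}(\Theta))
\end{align*}
for a weight $a:\R^+_0\to(0,\infty)$ to be chosen, and --- crucially --- I would first apply a Sontag-type reparametrization of $\beta_0$ so that, after composing the distance with a fixed $\mathcal{K}_{\infty}$ function, the attraction in Lemma~\ref{lm:S0-UGAS} decays at a controllable exponential rate that can be made to dominate $\lambda$.

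Granting a suitable weight, the coercivity~\eqref{eq:V-coercive-Lfct} and the dissipation~\eqref{eq:V-Dini-derivative} are the easy parts. For the lower bound I would simply evaluate the supremum at $t=0$, giving $V(x) \ge a(0)\norm{x}_{\Theta}$; for the upper bound I would insert the estimate from Lemma~\ref{lm:S0-UGAS} to get $V(x) \le \sup_{t\ge0} a(t)\beta_0(\norm{x}_{\Theta},t)$, which is a $\mathcal{K}_{\infty}$ function of $\norm{x}_{\Theta}$ as soon as $a$ grows more slowly than $\beta_0$ decays. For the dissipation I would use the semigroup property~\eqref{eq:S0-sgr}: since
\begin{align*}
V(S_0(s,0,x)) = \sup_{\tau\ge s} a(\tau-s)\,\norm{S_0(\tau,0,x)}_{\Theta},
\end{align*}
an \emph{increasing} weight $a$ forces $a(\tau-s)\le a(\tau)$ and hence $V(S_0(s,0,x))\le V(x)$, and quantifying the gap $a(\tau)-a(\tau-s)$ together with a lower estimate of $\norm{x}_{\Theta}$ in the relevant window yields, after computing the Dini derivative $\varlimsup_{s\to0+}(V(S_0(s,0,x))-V(x))/s$, the strict decay $\dot{V}_0(x)\le-\alpha(\norm{x}_{\Theta})$.

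For the Lipschitz property I would exploit~\eqref{eq:semiproc-estimate-1} in the form $|\norm{S_0(t,0,x_1)}_{\Theta}-\norm{S_0(t,0,x_2)}_{\Theta}|\le\e^{\lambda t}\norm{x_1-x_2}$, combined with the uniform bound $\norm{S_0(t,0,x_i)}_{\Theta}\le\beta_0(r_0,t)$ valid on $\ol{B}_{r_0}(\Theta)$. The latter confines the effective range of the supremum defining $V$ to a finite time window, on which $\sup_t a(t)\e^{\lambda t}$ is finite; this gives a finite Lipschitz constant for $V$, and dividing $V$ by that constant produces the required Lipschitz constant $1$ while leaving the coercivity and (rescaled) dissipation estimates of the form~\eqref{eq:V-coercive-Lfct}--\eqref{eq:V-Dini-derivative} intact.

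The hard part will be reconciling the Lipschitz bound with the strict dissipation, because~\eqref{eq:semiproc-estimate-1} is only a one-sided \emph{expansion} estimate: the trajectory difference may genuinely grow like $\e^{\lambda t}$, so a weight large enough to force a strict decrease of the look-ahead supremum over the long horizon that the attraction needs tends to destroy Lipschitz continuity, whereas a weight small enough for Lipschitzness ($a(t)\lesssim\e^{-\lambda t}$) destroys the decrease. Overcoming this is exactly where the two estimates must be used together and where the Sontag-type reparametrization enters: it must be arranged, uniformly for $x$ close to $\Theta$ (where $\norm{x}_{\Theta}$ is small), that the composed attraction rate strictly exceeds $\lambda$, so that the supremum is attained in a uniformly bounded window and the finite-window Lipschitz argument and the increasing-weight dissipation argument become simultaneously valid. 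The careful choice of $a$ and the Dini-derivative computation of the supremum are the technical heart of the proof.
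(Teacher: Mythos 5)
Your overall architecture is the right one and matches the paper's in spirit: a Yoshizawa/Henry-type look-ahead supremum of the distance to $\Theta$ along $S_0$, weighted by an increasing exponential, with the upper bound coming from the $\mathcal{KL}$-estimate~\eqref{eq:S0-UGAS} and the Lipschitz bound coming from the expansion estimate~\eqref{eq:semiproc-estimate-1}. You also correctly identify the central tension between the growing weight and the $\e^{\lambda t}$ expansion. But the device you propose to resolve it --- a Sontag-type reparametrization so that the ``composed attraction rate exceeds $\lambda$'' --- does not actually close the gap. Sontag's lemma gives $\beta_0(r,t)\le\theta_1(\theta_2(r)\e^{-\mu t})$, so after composing with $\rho:=\theta_1^{-1}$ the weighted term is bounded by $\e^{c t}\theta_2(\norm{x}_{\Theta})\e^{-\mu t}$, while its value at $t=0$ is only $\rho(\norm{x}_{\Theta})$. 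The supremum is therefore attained on a window of length of order $\frac{1}{\mu-c}\log\bigl(\theta_2(\norm{x}_{\Theta})/\rho(\norm{x}_{\Theta})\bigr)$, and the ratio $\theta_2(r)/\theta_1^{-1}(r)$ is in general unbounded as $r\to0$ (forcing it to be bounded would essentially force $\beta_0(r,0)\lesssim r$, i.e.\ at most linear overshoot, which is not available). So the effective time window is \emph{not} uniform over $\ol{B}_{r_0}(\Theta)$; it blows up as $x$ approaches $\Theta$, and the resulting modulus of continuity of $V$ degenerates from Lipschitz to H\"older of exponent $<1$. Since the Lipschitz constant $1$ is precisely what Lemma~\ref{lm:lISS-L-fct} consumes, this is a genuine gap, not a technicality. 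Your sentence claiming that the bound $\norm{S_0(t,0,x)}_{\Theta}\le\beta_0(r_0,t)$ ``confines the effective range of the supremum to a finite time window'' is where the argument breaks: $\beta_0(r_0,t)$ tends to $0$ but never vanishes, so nothing truncates the supremum.

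The paper's proof resolves exactly this point with a different device: instead of composing with a Sontag function, it inserts the deadzone $\eta_{\eps}(r)=\max\{0,r-\eps\}$ into the supremand, as in~\eqref{eq:Lfct-def-V-eps}. Because $\eta_{\eps}$ kills everything below level $\eps$ and $\beta_0(r_0,t)\le\eps$ for $t\ge T(\eps)$, the supremum is genuinely restricted to the $x$-independent compact window $[0,T(\eps)]$, and the normalizing prefactor $\e^{-(\lambda+c_0)T(\eps)}$ then yields Lipschitz constant $1$ directly from~\eqref{eq:semiproc-estimate-1} (note $\eta_\eps$ is itself $1$-Lipschitz, unlike a general $\theta_1^{-1}$). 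The price is that each $V^{\eps}$ vanishes on the $\eps$-neighbourhood of $\Theta$ and so is not positive definite; this is repaired by summing $V=\sum_k 2^{-k}V^{1/k}$, which restores the lower bound $\ul{\psi}\in\mathcal{K}_{\infty}$ while preserving the Lipschitz constant and the decay $\dot V_0\le -c_0V$. If you want to keep your plan, you should replace the Sontag reparametrization by this deadzone-plus-summation step (or supply an argument that the supremum window can be made uniform, which in general it cannot).
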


\begin{proof}
Choose an arbitrary $r_0 \in (0,\infty)$ and fix it for the rest of the proof. Also, choose $\beta_0 \in \mathcal{KL}$ as in Lemma~\ref{lm:S0-UGAS} and, for every $\eps > 0$, 
let $T(\eps) = T_{r_0}(\eps)$ be a time such that
\begin{align} \label{eq:Lfct-def-T(eps)}
\beta_0(r_0,t) \le \eps \qquad (t \in [T(\eps),\infty)).
\end{align} 
Set now, for every given $\eps > 0$,
\begin{align} \label{eq:Lfct-def-V-eps}
V^{\eps}(x) := \e^{-(\lambda+c_0)T(\eps)} \sup_{t \in [0,\infty)} \Big( \e^{c_0 t} \, \eta_{\eps}\big( \norm{S_0(t,0,x)}_{\Theta} \big) \Big)
\qquad (x \in \ol{B}_{r_0}(\Theta)),
\end{align}
where $c_0 \in (0,\infty)$ is an arbitrary constant (which is fixed throughout the proof) and $\eta_{\eps}(r) := \max\{ 0,r-\eps\}$ for every $r \in \R^+_0$. 
In view of~\eqref{eq:S0-UGAS} and~\eqref{eq:Lfct-def-T(eps)}, the supremum in~\eqref{eq:Lfct-def-V-eps} for $x \in \ol{B}_{r_0}(\Theta)$ actually extends only over a compact interval, namely
\begin{align} \label{eq:Lfct-V-eps-supremum-only-over-compact-interval}
V^{\eps}(x) = \e^{-(\lambda+c_0)T(\eps)} \sup_{t \in [0,T(\eps)]} \Big( \e^{c_0 t} \, \eta_{\eps}\big( \norm{S_0(t,0,x)}_{\Theta} \big) \Big)
\qquad (x \in \ol{B}_{r_0}(\Theta)).
\end{align}
In particular, $V^{\eps}: \ol{B}_{r_0}(\Theta) \to \R^+_0$ is a well-defined map (with finite values) and
\begin{align} \label{eq:Lfct-V-eps-le-beta0}
V^{\eps}(x) \le \e^{-\lambda T(\eps)}  \sup_{t \in [0,T(\eps)]} \Big(  \eta_{\eps}\big( \norm{S_0(t,0,x)}_{\Theta} \big) \Big)
\le \beta_0(\norm{x}_{\Theta},0) 
\qquad (x \in \ol{B}_{r_0}(\Theta))
\end{align}
because $\eta_{\eps}(r) \le r$ for all $r \in \R^+_0$. 
Since, moreover, 
$|\eta_{\eps}(r)-\eta_{\eps}(s)| \le |r-s|$ for all $r,s \in \R^+_0$, 
we see from~\eqref{eq:Lfct-V-eps-supremum-only-over-compact-interval} and~\eqref{eq:semiproc-estimate-1} that 
\begin{align} \label{eq:Lfct-V-eps-Lipschitz-with-const-1}
|V^{\eps}(x) &- V^{\eps}(y)| 
\le 
\e^{-(\lambda+c_0)T(\eps)} \sup_{t \in [0,T(\eps)]} \Big| \e^{c_0 t} \, \eta_{\eps}\big( \norm{S_0(t,0,x)}_{\Theta}\big) - \e^{c_0 t} \, \eta_{\eps}\big( \norm{S_0(t,0,y)}_{\Theta} \big) \Big| \notag \\
&\le 
\e^{-\lambda T(\eps)} \sup_{t \in [0,T(\eps)]} \Big|  \norm{S_0(t,0,x)}_{\Theta} - \norm{S_0(t,0,y)}_{\Theta}  \Big| \\
&\le 
\e^{-\lambda T(\eps)} \sup_{t \in [0,T(\eps)]} \norm{ S_0(t,0,x) - S_0(t,0,y) }
\le \norm{x-y}  
\qquad (x,y \in \ol{B}_{r_0}(\Theta)).  \notag
\end{align}
(In the first inequality above, we used the elementary fact that $|\sup_{t \in I} a_t - \sup_{t\in I} b_t| \le \sup_{t\in I} |a_t-b_t|$ for arbitrary bounded functions $t \mapsto a_t, b_t$ on an arbitrary set $I$, and in the third inequality above, we used the elementary fact that $|\norm{\xi}_{\Theta}-\norm{\eta}_{\Theta}| \le \norm{\xi-\eta}$ for arbitrary $\xi,\eta \in X$.) 
Additionally, for every $x \in B_{r_0}(\Theta)$, we have $S_0(\tau,0,x) \in B_{r_0}(\Theta)$ for $\tau$ small enough and thus, by~\eqref{eq:Lfct-def-V-eps} and the semigroup property~\eqref{eq:S0-sgr},
\begin{align*}
V^{\eps}(S_0(\tau,0,x)) = \e^{-(\lambda+c_0)T(\eps)} \sup_{t \in [0,\infty)} \Big( \e^{c_0 t} \, \eta_{\eps}\big( \norm{S_0(t+\tau,0,x)}_{\Theta} \big) \Big) 
\le \e^{-c_0 \tau} V^{\eps}(x)
\end{align*}
for every $x \in B_{r_0}(\Theta)$ and all sufficiently small times $\tau$. Consequently,
\begin{align} \label{eq:Lfct-Dini-derivative-V-eps}
\dot{V}_0^{\eps}(x) = \varlimsup_{\tau \to 0+} \frac{1}{\tau} \big( V^{\eps}(S_0(\tau,0,x)) - V^{\eps}(x) \big) \le -c_0 V^{\eps}(x)
\qquad (x \in B_{r_0}(\Theta)). 
\end{align}
With the help of the auxiliary functions $V^{\eps}$, we can now construct a function $V: \ol{B}_{r_0}(\Theta) \to \R^+_0$ with the desired properties. Indeed, let 
\begin{align} \label{eq:Lfct-def-V}
V(x) := \sum_{k=1}^{\infty} 2^{-k} V^{1/k}(x) \qquad (x \in \ol{B}_{r_0}(\Theta)).
\end{align}
We then conclude from~\eqref{eq:Lfct-V-eps-le-beta0}, \eqref{eq:Lfct-V-eps-Lipschitz-with-const-1}, \eqref{eq:Lfct-Dini-derivative-V-eps} that
\begin{gather}
V(x) \le \beta_0(\norm{x}_{\Theta},0) \qquad (x \in \ol{B}_{r_0}(\Theta)), \label{eq:Lfct-bounded-above-by-beta0}\\
|V(x)-V(y)| \le \sum_{k=1}^{\infty} 2^{-k} |V^{1/k}(x)-V^{1/k}(y)| \le \norm{x-y}
\qquad (x,y \in \ol{B}_{r_0}(\Theta)),  \label{eq:Lfct-Lipschitz-with-constant-1}\\
\dot{V}_0(x) \le \sum_{k=1}^{\infty} 2^{-k} \dot{V}_0^{1/k}(x) \le -c_0 V(x)  \qquad (x \in B_{r_0}(\Theta)). \label{eq:Lfct-Dini-derivative}
\end{gather}
Since $\sup_{t\in[0,\infty)} (\e^{c_0 t} \eta_{1/k}(\norm{S_0(t,0,x)}_{\Theta})) \ge \eta_{1/k}(\norm{x}_{\Theta})$ for all $x \in X$, we also conclude from~\eqref{eq:Lfct-def-V-eps} and~\eqref{eq:Lfct-def-V} that
\begin{align} \label{eq:Lfct-bounded-below}
V(x) \ge \sum_{k=1}^{\infty} 2^{-k} \e^{-(\lambda + c_0) T(1/k)} \eta_{1/k}(\norm{x}_{\Theta}) \qquad (x \in \ol{B}_{r_0}(\Theta).
\end{align}
In view of these estimates, we now define the comparison functions $\ol{\psi}$, $\ul{\psi}$ and $\alpha$  in the following way:
\begin{align*}
\ol{\psi}(r) := \beta_0(r,0) + r \qquad \text{and} \qquad \ul{\psi}(r) := \sum_{k=1}^{\infty} 2^{-k} \e^{-(\lambda + c_0) T(1/k)} \eta_{1/k}(r)
\end{align*}
and $\alpha(r) := c_0 \ul{\psi}(r)$ for $r \in \R^+_0$. It is easy to verify that $\ol{\psi}$, $\ul{\psi}$ and hence $\alpha$ belong to $\mathcal{K}_{\infty}$. And, moreover, by virtue of~\eqref{eq:Lfct-bounded-above-by-beta0}, \eqref{eq:Lfct-Lipschitz-with-constant-1}, \eqref{eq:Lfct-Dini-derivative}, \eqref{eq:Lfct-bounded-below}, the desired estimates~\eqref{eq:V-coercive-Lfct} and~\eqref{eq:V-Dini-derivative} follow. 
\end{proof}

It should be noticed that the functions $V, \ul{\psi}, \alpha$ constructed in the proof above all depend on the chosen radius $r_0 \in (0,\infty)$ because these functions are defined in terms of the times $T(\eps) = T_{r_0}(\eps)$ from~\eqref{eq:Lfct-def-T(eps)}. 
With the next lemma, we show that the local Lyapunov function $V$ for the undisturbed system is also a local input-to-state Lyapunov function 
for the disturbed system w.r.t.~$\Theta$. (See~\cite{DaMi13} for the definition of local input-to-state Lyapunov functions w.r.t.~an equilibrium point.)

\begin{lm} \label{lm:lISS-L-fct}
Suppose that Condition~\ref{cond:g,h} is satisfied and let $\Theta$ be the global attractor of the undisturbed system $S_0$. Also, let $r_0 > 0$ and let $V: \ol{B}_{r_0}(\Theta) \to \R_0^+$ be chosen as in the previous lemma. Then there exist comparison functions $\alpha, \sigma \in \mathcal{K}$ such that 
for every $u \in \mathcal{U}$
\begin{align*}
\dot{V}_u(x) := \varlimsup_{t\to0+} \frac{1}{t}\big( V(S_u(t,0,x))-V(x) \big) \le -\alpha(\norm{x}_{\Theta}) + \sigma(\norm{u}_{\infty})
\qquad (x \in B_{r_0}(\Theta)).
\end{align*}
\end{lm}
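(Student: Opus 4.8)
The plan is to estimate the disturbed Dini derivative $\dot{V}_u(x)$ by comparing the disturbed trajectory $S_u(t,0,x)$ with the undisturbed one $S_0(t,0,x)$, exploiting that $V$ has Lipschitz constant exactly $1$. First I would fix $x \in B_{r_0}(\Theta)$ and observe that, by the continuity in $t$ of weak solutions, both $S_u(t,0,x)$ and $S_0(t,0,x)$ converge to $x \in B_{r_0}(\Theta)$ as $t \to 0+$ and hence lie in $\ol{B}_{r_0}(\Theta)$ for all sufficiently small $t > 0$. Consequently $V$ is defined along both trajectories near $t = 0$, and the difference quotient defining $\dot{V}_u(x)$ may be split as
\begin{align*}
\frac{V(S_u(t,0,x)) - V(x)}{t} = \frac{V(S_u(t,0,x)) - V(S_0(t,0,x))}{t} + \frac{V(S_0(t,0,x)) - V(x)}{t}.
\end{align*}

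Next I would bound the first summand on the right. Since $V$ has Lipschitz constant $1$, one has $|V(S_u(t,0,x)) - V(S_0(t,0,x))| \le \norm{S_u(t,0,x) - S_0(t,0,x)}$, and the right-hand side is exactly what the second estimate~\eqref{eq:semiproc-estimate-2} of Lemma~\ref{lm:semiproc-estimates} controls, namely $\norm{S_u(t,0,x) - S_0(t,0,x)} \le 2\e^{2\lambda}\norm{h}\norm{u}_{\infty}\, t$ for $t \in [0,1]$. Dividing by $t$ therefore yields, for every $t \in (0,1]$,
\begin{align*}
\frac{V(S_u(t,0,x)) - V(x)}{t} \le 2\e^{2\lambda}\norm{h}\norm{u}_{\infty} + \frac{V(S_0(t,0,x)) - V(x)}{t}.
\end{align*}

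Finally I would pass to the upper limit $t \to 0+$. As the first term on the right is constant in $t$, the limes superior of the right-hand side equals $2\e^{2\lambda}\norm{h}\norm{u}_{\infty} + \dot{V}_0(x)$, which by the Dini-derivative estimate~\eqref{eq:V-Dini-derivative} of the previous lemma is at most $2\e^{2\lambda}\norm{h}\norm{u}_{\infty} - \alpha(\norm{x}_{\Theta})$. Setting $\sigma(r) := 2\e^{2\lambda}\norm{h}\, r$, which is strictly increasing (recall $h \ne 0$) and hence belongs to $\mathcal{K}$, and retaining the function $\alpha \in \mathcal{K}_{\infty} \subset \mathcal{K}$ from Lemma~\ref{lm:L-fct}, then gives the claimed inequality. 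The argument is essentially routine; the only points requiring care — and the nearest thing to an obstacle — are the well-definedness of $V$ along both trajectories near $t = 0$ and the elementary fact that the $\varlimsup$ of a sum with a $t$-independent summand splits that summand off. The decisive structural ingredient is that $V$ is Lipschitz with constant precisely $1$, which is exactly what lets the trajectory-difference bound~\eqref{eq:semiproc-estimate-2} transfer verbatim into a bound on the increments of $V$.
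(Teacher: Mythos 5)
Your proposal is correct and follows essentially the same route as the paper: both split the increment of $V$ into an undisturbed part controlled by the Dini-derivative estimate~\eqref{eq:V-Dini-derivative} and a disturbance part controlled via the Lipschitz constant $1$ of $V$ together with~\eqref{eq:semiproc-estimate-2}, yielding $\sigma(r) = 2\e^{2\lambda}\norm{h}\,r$. The only cosmetic difference is that the paper first applies subadditivity of $\varlimsup$ and then bounds each term, whereas you bound the difference quotient pointwise before passing to the limit.
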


\begin{proof}
Choose $\alpha = \alpha_{r_0} \in \mathcal{K}_{\infty}$ as in Lemma~\ref{lm:L-fct} and define $\sigma \in \mathcal{K}_{\infty}$ 
by $\sigma(r) := 2\e^{2\lambda} \norm{h} r$ for all $r \in \R^+_0$. 
We then see from Lemma~\ref{lm:L-fct} and from \eqref{eq:semiproc-estimate-2} 
that for every $x \in B_{r_0}(\Theta)$ and every $u \in \mathcal{U}$
\begin{align}
\dot{V}_u(x) \le \varlimsup_{t\to 0+} \frac{1}{t}\big( V(S_0(t,0,x))-V(x) \big) + \varlimsup_{t\to 0+} \frac{1}{t}\big( V(S_u(t,0,x))-V(S_0(t,0,x)) \big) \notag \\
\le -\alpha(\norm{x}_{\Theta}) + \varlimsup_{t\to 0+} \frac{1}{t} \norm{ S_u(t,0,x) - S_0(t,0,x) } 
\le -\alpha(\norm{x}_{\Theta}) + \sigma(\norm{u}_{\infty}),
\end{align}
as desired. 
\end{proof}

With these lemmas at hand, we can now establish the local input-to-state stability of the disturbed reaction-diffusion system~\eqref{eq:react-diffus-eq} w.r.t.~the global attractor of the undisturbed system~\eqref{eq:react-diffus-eq, undisturbed}. 
It is an open question -- left to future research -- whether this result can actually be extended to a semi-global 
input-to-state stability 
result. See the remarks after the proof for a discussion of the obstacles to such an extension. 

\begin{thm} \label{thm:lISS}
Suppose that Condition~\ref{cond:g,h} is satisfied and let $\Theta$ be the global attractor of the undisturbed system $S_0$. Then the disturbed system $(S_u)_{u\in\mathcal{U}}$ is locally input-to-state stable w.r.t.~$\Theta$, that is, there exist comparison functions $\beta \in \mathcal{KL}$ and $\gamma \in \mathcal{K}$ and radii $r_{0x}, r_{0u} > 0$ such that
\begin{align}
\norm{S_u(t,0,x_0)}_{\Theta} \le \beta(\norm{x_0}_{\Theta},t) + \gamma(\norm{u}_{\infty}) \qquad (t \in \R^+_0)
\end{align}
for all $(x_0,u) \in X \times \mathcal{U}$ with $\norm{x_0}_{\Theta} \le r_{0x}$ and  $\norm{u}_{\infty} \le r_{0u}$.
\end{thm}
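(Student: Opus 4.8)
The plan is to turn the local input-to-state Lyapunov estimate from Lemma~\ref{lm:lISS-L-fct} into the desired $\mathcal{KL}$/$\mathcal{K}$ bound by a comparison argument along trajectories, taking care that the trajectory never leaves the ball $B_{r_0}(\Theta)$ on which that estimate is available. So I fix $r_0 > 0$ together with the associated objects $V, \ul\psi, \ol\psi, \alpha, \sigma$ from Lemmas~\ref{lm:L-fct} and~\ref{lm:lISS-L-fct}, set $\rho := \alpha \circ \ol\psi^{-1} \in \mathcal{K}_{\infty}$, and fix a locally Lipschitz $\hat\rho \in \mathcal{K}_{\infty}$ with $\hat\rho \le \rho$ (such a minorant exists by a standard smoothing of $\mathcal{K}$-functions). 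The gain will be $\gamma_1(c) := \hat\rho^{-1}(2c)$. Given $(x_0,u)$ with $\norm{x_0}_{\Theta} \le r_{0x}$ and $\norm{u}_{\infty} \le r_{0u}$, where the radii are fixed further below, I abbreviate $y(t) := S_u(t,0,x_0)$, $v(t) := V(y(t))$ and $c := \sigma(\norm{u}_{\infty})$.

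First I would derive a scalar differential inequality for $v$. Fixing $t$ with $y(t) \in B_{r_0}(\Theta)$, the cocycle property gives $y(t+\tau) = S_{u(\cdot+\tau)}(\tau,0,y(t))$, so that $\ol{\partial}_t^+ v(t) = \dot V_{u(\cdot+t)}(y(t))$ with $\dot V$ as in Lemma~\ref{lm:lISS-L-fct}. Since $\norm{u(\cdot+t)}_{\infty} \le \norm{u}_{\infty}$ and $\sigma$ is increasing, Lemma~\ref{lm:lISS-L-fct} yields $\ol{\partial}_t^+ v(t) \le -\alpha(\norm{y(t)}_{\Theta}) + c$, and the coercivity bound $v(t) \le \ol\psi(\norm{y(t)}_{\Theta})$ from~\eqref{eq:V-coercive-Lfct} gives $\norm{y(t)}_{\Theta} \ge \ol\psi^{-1}(v(t))$, whence
\[ \ol{\partial}_t^+ v(t) \le -\rho(v(t)) + c \le -\hat\rho(v(t)) + c. \]
As $v = V \circ y$ is continuous, being the composition of the Lipschitz map $V$ with the continuous trajectory $y$, this is a one-sided Dini-derivative bound for a continuous function, which is exactly the setting in which the scalar comparison principle applies without any absolute continuity of $v$.

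Next I would run the standard input-to-state comparison argument on this inequality, but only for as long as $y$ stays in $B_{r_0}(\Theta)$; let $\tau^{\ast} \in (0,\infty]$ be the first time at which $\norm{y(t)}_{\Theta} = r_0$. Two observations combine on $[0,\tau^{\ast})$: the sublevel set $\{v \le \gamma_1(c)\}$ is forward invariant, because at the level $v = \gamma_1(c) = \hat\rho^{-1}(2c)$ one has $\ol{\partial}_t^+ v \le -2c + c \le 0$; and while $v \ge \gamma_1(c)$ one has $\ol{\partial}_t^+ v \le -\hat\rho(v)/2$, so comparison with the solution of $\dot\omega = -\hat\rho(\omega)/2$, $\omega(0) = v(0)$, which is well-posed for positive initial data since $\hat\rho$ is locally Lipschitz, produces a $\beta_1 \in \mathcal{KL}$ with $\beta_1(s,0) = s$ and $v(t) \le \beta_1(v(0),t)$ on the decay regime. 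Together these give
\[ v(t) \le \max\{\beta_1(v(0),t),\, \gamma_1(c)\} \qquad (0 \le t < \tau^{\ast}). \]
I would then close the locality loop: since $v(0) \le \ol\psi(\norm{x_0}_{\Theta}) \le \ol\psi(r_{0x})$ and $\gamma_1(c) \le \gamma_1(\sigma(r_{0u}))$, choosing $r_{0x}, r_{0u}$ so small that $\ol\psi(r_{0x}) < \ul\psi(r_0)$ and $\gamma_1(\sigma(r_{0u})) < \ul\psi(r_0)$ forces $v(t) < \ul\psi(r_0)$, hence $\norm{y(t)}_{\Theta} \le \ul\psi^{-1}(v(t)) < r_0$, on all of $[0,\tau^{\ast})$; by continuity the boundary $\norm{y}_{\Theta} = r_0$ is never reached, so $\tau^{\ast} = \infty$. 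Feeding $v(0) \le \ol\psi(\norm{x_0}_{\Theta})$ and $c = \sigma(\norm{u}_{\infty})$ back into the displayed bound, applying $\ul\psi^{-1}$, and using $\max\{a,b\} \le a+b$ yields the claim with $\beta(s,t) := \ul\psi^{-1}(\beta_1(\ol\psi(s),t)) \in \mathcal{KL}$ and $\gamma(r) := \ul\psi^{-1}(\gamma_1(\sigma(r))) \in \mathcal{K}$.

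The main obstacle is the interplay between the locality of the estimates and the comparison argument: the differential inequality is available only while $y(t) \in B_{r_0}(\Theta)$, yet one needs it precisely to prove that $y$ never leaves this ball. Resolving this requires the first-exit-time bookkeeping above together with the quantitative choice of $r_{0x}$ and $r_{0u}$ that keeps $v$ below the level $\ul\psi(r_0)$. Secondary technical points are the comparison principle for continuous functions with one-sided Dini-derivative bounds, which spares us proving absolute continuity of $t \mapsto V(y(t))$, and the non-Lipschitzness of the $\mathcal{K}$-function $\rho$, which is why I pass to the locally Lipschitz minorant $\hat\rho$ before invoking uniqueness for the scalar comparison ODE.
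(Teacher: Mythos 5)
Your argument is correct and rests on the same pillars as the paper's proof -- the Lyapunov function of Lemma~\ref{lm:lISS-L-fct}, a comparison principle for the Dini derivative of $t \mapsto V(S_u(t,0,x_0))$, and first-exit-time bookkeeping to justify using the locally valid dissipation estimate globally in time -- but you organize the gain part differently. The paper first converts the dissipation-form estimate $\dot V_u(x) \le -\alpha(\norm{x}_{\Theta}) + \sigma(\norm{u}_\infty)$ into the implication form~\eqref{eq:lISS-L-fct,implicative} via $\chi := \alpha_0^{-1}(2\sigma_0(\cdot))$, introduces the sublevel set $M_u$ from~\eqref{eq:M_u-def}, proves its forward invariance, and then splits into the two cases $x_0 \in M_u$ and $x_0 \notin M_u$, invoking the comparison lemma of~\cite{MiIt16} only in the regime outside $M_u$; the sum-form ISS estimate is assembled from the two cases. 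You instead keep the dissipation form, reduce everything to the single scalar inequality $\ol{\partial}_t^+ v \le -\hat\rho(v) + c$ on the whole ball, and extract the max-type bound $v(t) \le \max\{\beta_1(v(0),t), \gamma_1(c)\}$ in one pass; your forward-invariance claim for $\{v \le \gamma_1(c)\}$ is exactly the paper's step~(i) in disguise and needs the same small $\eps$-perturbation argument to be made rigorous. What your route buys is a more self-contained comparison step (the locally Lipschitz minorant $\hat\rho$ replaces the citation of~\cite{MiIt16}) and the slightly sharper max-form estimate; what the paper's route buys is that the invariance of $M_u$ and the role of the gain $\gamma = \ul\psi^{-1}\circ\ol\psi\circ\chi$ are completely explicit, and that no well-posedness of the comparison ODE has to be addressed. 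Both proofs hit the same obstruction -- the circularity between needing the trajectory to stay in $B_{r_0}(\Theta)$ and needing the estimate to prove it stays there -- and resolve it identically, by choosing $r_{0x}, r_{0u}$ so that the a priori bound keeps $\norm{y(t)}_\Theta$ strictly below $r_0$ up to the putative exit time.
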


\begin{proof}
Choose an arbitrary $r_0 \in (0,\infty)$ and fix it for the entire proof. Also,  take $V = V_{r_0}$ and $\ul{\psi} = \ul{\psi}_{r_0}$, $\ol{\psi}$ as in Lemma~\ref{lm:L-fct}.
It then immediately follows from Lemma~\ref{lm:lISS-L-fct} that there exist comparison functions $\alpha = \alpha_{r_0} \in \mathcal{K}$ and $\chi = \chi_{r_0} \in \mathcal{K}$ such that for all $(x_0,u) \in X \times \mathcal{U}$ with $r_0 \ge \norm{x_0}_{\Theta} \ge \chi(\norm{u}_{\infty})$ one has
\begin{align} \label{eq:lISS-L-fct,implicative}
\dot{V}_u(x_0) \le -\alpha(\norm{x_0}_{\Theta}). 
\end{align}
(Simply choose $\chi(r) := \alpha_0^{-1}(2\sigma_0(r))$ and $\alpha(r) := \alpha_0(r)/2$, where $\alpha_0, \sigma_0 \in \mathcal{K}_{\infty}$ are as in Lemma~\ref{lm:lISS-L-fct}.)
According to the comparison lemma from~\cite{MiIt16} (Corollary~1), we can then choose a comparison function $\ol{\beta} = \ol{\beta}_{\alpha\circ \ol{\psi}^{-1}}$ in such a way that for every $T \in (0,\infty]$ and every function $v \in C([0,T),\R^+_0)$ with
\begin{align*}
\ol{\partial}_t^+ v(t) \le -(\alpha \circ \ol{\psi}^{-1})(v(t)) \qquad (t \in [0,T))
\end{align*} 
one has $v(t) \le \ol{\beta}(v(0),t)$ for all $t\in[0,T)$. 
We now define
\begin{align} \label{eq:def-beta-gamma}
\beta(r,t) := \ul{\psi}^{-1}\big( \ol{\beta}(\ol{\psi}(r),t) \big) 
\qquad \text{and} \qquad
\gamma(r) := \ul{\psi}^{-1}\big( \ol{\psi}(\chi(r)) \big)
\end{align} 
for $r,t \in \R^+_0$ and choose $r_{0x}, r_{0u} \in (0,\infty)$ so small that
\begin{align} \label{eq:def-r0x-and-r0u}
r_{0x} < r_0 \qquad \text{and} \qquad \beta(r_{0x},0) < r_0 
\qquad \text{and} \qquad
\gamma(r_{0u}) < r_0. 
\end{align}
Also, we will write
\begin{align} \label{eq:M_u-def}
M_u := \big\{ x \in \ol{B}_{r_0}(\Theta): V(x) \le \ol{\psi}(\chi(\norm{u}_{\infty})) \big\}
\end{align}
for $u \in \mathcal{U}$.
Clearly, $\beta \in \mathcal{KL}$, $\gamma \in \mathcal{K}$ and $M_u$ is closed for every $u \in \mathcal{U}$. Additionally, for every $u \in \ol{B}_{r_{0u}}(0)$ we have by~\eqref{eq:def-r0x-and-r0u} that
\begin{align} \label{eq:M_u-contained-in-better-sets}
M_u \subset \big\{ x \in \ol{B}_{r_0}(\Theta): \norm{x}_{\Theta} \le \gamma(\norm{u}_{\infty}) \big\} \subset B_{r_0}(\Theta).
\end{align}
%

After these preliminary considerations, we now prove that
\begin{align} \label{eq:lISS-estimate}
\norm{S_u(t,0,x_0)}_{\Theta} \le \beta(\norm{x_0}_{\Theta},t) + \gamma(\norm{u}_{\infty}) \qquad (t \in \R^+_0)
\end{align}
for all $(x_0,u) \in \ol{B}_{r_{0x}}(\Theta) \times \ol{B}_{r_{0u}}(0)$ and thus obtain the desired local input-to-state stability. So, let $(x_0,u) \in \ol{B}_{r_{0x}}(\Theta) \times \ol{B}_{r_{0u}}(0)$ be fixed for the rest of the proof. 
We will distinguish two cases in the following, namely the case where $x_0 \in M_u$ (part (i) of the proof) and the case where $x_0 \notin M_u$ (part (ii) of the proof). 
\smallskip

(i) Suppose we are in the case $x_0 \in M_u$. In order to establish~\eqref{eq:lISS-estimate} in that case, 
we will show -- in two steps -- that for every $t_0 \in [0,\infty)$ one has
\begin{align} \label{eq:M_u-invariant}
S_u(t,t_0,M_u) \in M_u \qquad (t \in [t_0,\infty)). 
\end{align}
So, let $t_0 \in [0,\infty)$ and $x_{t_0} \in M_u$ 
and 
\begin{align} \label{eq:def-T-case(i)}
T := \sup \big\{ T' \in (t_0,\infty): \norm{x(t)}_{\Theta} < r_0 \text{ for all } t \in [t_0,T') \big\}, 
\end{align}
where we use the abbreviation $x(t) := S_u(t,t_0,x_{t_0})$. 
Since $x(t_0) = x_{t_0} \in M_u$ and thus $\norm{x(t_0)}_{\Theta} < r_0$ by~\eqref{eq:M_u-contained-in-better-sets}, we observe 
that $T \in (t_0,\infty]$ and that
\begin{align} \label{eq:norm-initially-less-than-r0,case(i)}
\norm{x(t)}_{\Theta} < r_0 \qquad (t \in [t_0,T)).
\end{align}

As a first step, we show that $x(t) \in M_u$ at least for all $[t_0,T)$. 
Assuming the contrary, we find a $t \in [t_0,T)$ and an $\eps > 0$ such that $V(x(t)) > \ol{\psi}(\chi(\norm{u}_{\infty})) + \eps$. Since $x(t_0) = x_{t_0} \in M_u$ and thus $V(x(t_0)) \le \ol{\psi}(\chi(\norm{u}_{\infty})) + \eps$, 
we observe 
that
\begin{align}
t_1 := \inf \big\{ t \in [t_0,T): V(x(t)) > \ol{\psi}(\chi(\norm{u}_{\infty})) + \eps \big\}
\end{align}
belongs to the interval $(t_0,T)$ and, moreover, $V(x(t_1)) = \ol{\psi}(\chi(\norm{u}_{\infty})) + \eps$. So, 
\begin{align*}
\ol{\psi}(\norm{x(t_1)}_{\Theta}) \ge V(x(t_1)) > \ol{\psi}(\chi(\norm{u}_{\infty})) \ge \ol{\psi}\big(\chi(\norm{u(\cdot+t_1)}_{\infty})\big)
\end{align*} 
and therefore we get by virtue of~\eqref{eq:lISS-L-fct,implicative} that
\begin{align}
\varlimsup_{t\to 0+} \frac{1}{t}\Big( V(x(t_1+t))-V(x(t_1)) \Big) 
&= \varlimsup_{t\to 0+} \frac{1}{t}\Big( V\big(S_{u(\cdot + t_1)}(t,0,x(t_1))\big)-V(x(t_1)) \Big)  \notag \\
&= \dot{V}_{u(\cdot + t_1)}(x(t_1)) \le -\alpha(\norm{x(t_1)}_{\Theta}) < 0.
\end{align}
Consequently, there exists a $\delta >0$ such that $V(x(t_1+t)) \le V(x(t_1)) = \ol{\psi}(\chi(\norm{u}_{\infty})) + \eps$ for all $t \in [0,\delta)$. Contradiction to the definition of $t_1$! 
\smallskip

As a second step, we show that $T = \infty$. Indeed, assuming $T < \infty$, we would get by the first step and continuity 
that even $x(T) \in M_u$ and thus $\norm{x(T)}_{\Theta} < r_0$ by~\eqref{eq:M_u-contained-in-better-sets}. And from this, in turn, it would follow again by continuity that $\norm{x(t)}_{\Theta} < r_0$ for all $t \in [T,T+\delta)$ with some $\delta > 0$. In conjunction with~\eqref{eq:norm-initially-less-than-r0,case(i)}, this would yield a contradiction to the definition~\eqref{eq:def-T-case(i)} of $T$! 
\smallskip

Combining now the first and the second step, we finally obtain the desired invariance~\eqref{eq:M_u-invariant}, which clearly implies~\eqref{eq:lISS-estimate} in the case $x_0 \in M_u$. 
\smallskip

(ii) Suppose we are in the case $x_0 \notin M_u$. In order to establish~\eqref{eq:lISS-estimate} in that case, 
we will show -- in three steps -- that for some $t_0 \in (0,\infty]$ one has
\begin{align} 
\norm{S_u(t,0,x_0)}_{\Theta} &\le \beta(\norm{x_0}_{\Theta},t) \qquad (t\in [0,t_0)) \label{eq:part-(ii),1} \\
\norm{S_u(t,0,x_0)}_{\Theta} &\le \gamma(\norm{u}_{\infty}) \qquad (t \in (t_0,\infty)). \label{eq:part-(ii),2}
\end{align}
Indeed, let $t_0 := \inf \{t \in \R^+_0: x(t) \in M_u\}$ 
and 
\begin{align} \label{eq:def-T-case(ii)}
T := \sup \big\{ T' \in (0,t_0): \norm{x(t)}_{\Theta} < r_0 \text{ for all } t \in [0,T') \big\},
\end{align}
where we use the abbreviation $x(t) := S_u(t,0,x_0)$. (In view of the standard convention $\inf \emptyset := \infty$, we have $t_0 = \infty$ in case $x(t) \notin M_u$ for all $t \in \R^+_0$.) Since $x(0) = x_0 \in (X\setminus M_u) \cap \ol{B}_{r_{0x}}(\Theta)$ and thus $\norm{x(0)}_{\Theta} < r_0$ by~\eqref{eq:def-r0x-and-r0u}, we observe that $t_0 \in (0,\infty]$ and $T \in (0,t_0]$ and that
\begin{align} \label{eq:norm-initially-less-than-r0,case(ii)}
x(t) \notin M_u \qquad (t \in [0,t_0)) \qquad \text{and} \qquad \norm{x(t)}_{\Theta} < r_0 \qquad (t \in [0,T)). 
\end{align} 

As a first step, we show that $\norm{x(t)}_{\Theta} \le \beta(\norm{x_0}_{\Theta},t)$ at least for all $t \in [0,T)$. 
Indeed, in view of~(\ref{eq:norm-initially-less-than-r0,case(ii)}.a) and (\ref{eq:norm-initially-less-than-r0,case(ii)}.b) we have
\begin{align*}
\ol{\psi}(\norm{x(t)}_{\Theta}) \ge V(x(t)) > \ol{\psi}(\chi(\norm{u}_{\infty})) \ge \ol{\psi}\big(\chi(\norm{u(\cdot+t)}_{\infty})\big)
\qquad (t \in [0,T))
\end{align*}
and therefore we get by virtue of~\eqref{eq:lISS-L-fct,implicative} that
\begin{align}
\ol{\partial}_t^+ V(x(t)) &= \varlimsup_{\tau\to 0+} \frac{1}{\tau}\Big( V(x(t+\tau))-V(x(t)) \Big) \notag \\
&= \varlimsup_{\tau\to 0+} \frac{1}{\tau}\Big( V\big(S_{u(\cdot + t)}(\tau,0,x(t))\big)-V(x(t)) \Big) = \dot{V}_{u(\cdot + t)}(x(t)) \notag \\
&\le -\alpha(\norm{x(t)}_{\Theta}) \le -\big(\alpha \circ \ol{\psi}^{-1}\big)\big(V(x(t))\big)
\qquad (t \in [0,T)).
\end{align}
Consequently, by our choice of $\ol{\beta}$ we see that
\begin{align*}
V(x(t)) \le \ol{\beta}(V(x(0)),t) \qquad (t \in [0,T)).
\end{align*}
In view of~(\ref{eq:norm-initially-less-than-r0,case(ii)}.b) and our definition~\eqref{eq:def-beta-gamma} of $\beta$, the assertion of the first step is then clear. 
\smallskip

As a second step, we show that $T = t_0$. 
Indeed, assuming $T < t_0$, we would get by the first step and continuity that even $\norm{x(T)}_{\Theta} \le \beta(\norm{x_0}_{\Theta},T) \le \beta(r_{0x},0)$ and thus $\norm{x(T)}_{\Theta} < r_0$ by~\eqref{eq:def-r0x-and-r0u}. And from this, in turn, it would follow that $\norm{x(t)}_{\Theta} < r_0$ for all $t \in [T,T+\delta)$ with some $\delta > 0$. In conjunction with~(\ref{eq:norm-initially-less-than-r0,case(ii)}.b), this would yield a contradiction to the definition~\eqref{eq:def-T-case(ii)} of $T$!
\smallskip

As a third step, we show that $\norm{x(t)}_{\Theta} \le \gamma(\norm{u}_{\infty})$ for all $t \in [t_0,\infty)$. 
We can assume $t_0 < \infty$ 
because in the case $t_0 = \infty$ the assertion is empty. So, by 
the definition of $t_0$ it then follows that $x(t_0) \in M_u$ and therefore by virtue of~\eqref{eq:M_u-invariant} 
\begin{align*}
x(t) = S_u(t,0,x_0) = S_u(t,t_0,x(t_0)) \in M_u \qquad (t\in[t_0,\infty)).
\end{align*} 
In view of~\eqref{eq:M_u-contained-in-better-sets}, the assertion of the third step is then clear.
\smallskip

Combining now the first, second and third step, we finally obtain the desired estimates~\eqref{eq:part-(ii),1} and~\eqref{eq:part-(ii),2}, which clearly imply~\eqref{eq:lISS-estimate} in the case $x_0 \notin M_u$. 
\end{proof}

An inspection of the above proof shows that we actually proved a bit more than local input-to-state stability, namely we have: for every $r_0 > 0$ there exist $\beta \in \mathcal{KL}$ and $\gamma \in \mathcal{K}$ and $r_{0x}, r_{0u} > 0$ such that the estimate~\eqref{eq:lISS-estimate} holds true for all $\norm{x}_{\Theta} \le r_{0x}$ and $\norm{u}_{\infty} \le r_{0u}$. So, if by choosing $r_0$ large enough, we could also ensure that $r_{0x}$ and $r_{0u}$ with~\eqref{eq:def-r0x-and-r0u} can be chosen arbitrarily large, we would even have semi-global input-to-state stability. 
Yet, this is not so clear because the functions $\beta = \beta_{r_0}$ and $\gamma = \gamma_{r_0}$ from~\eqref{eq:def-r0x-and-r0u} which determine our choice of $r_{0x}$ and $r_{0u}$ depend on $r_0$ themselves (basically because $V = V_{r_0}$ and $\underline{\psi} = \underline{\psi}_{r_0}$ depend on $r_0$ as was pointed out after Lemma~\ref{lm:L-fct}). 
We therefore leave the question of semi-global input-to-state stability to future research. 

\section*{Acknowledgements}

S. Dashkovskiy and O. Kapustyan are partially supported by the German Research Foundation (DFG) and the State Fund for Fundamental Research of Ukraine (SFFRU) through the joint German-Ukrainian grant ``Stability and robustness of attractors of nonlinear infinite-dimensional systems with respect to disturbances'' (DA 767/12-1). 

\begin{small}

\end{small}

\end{document}